\newtheorem{theorem}{Theorem}[section]
\newtheorem{lemma}{Lemma}[section]
\newtheorem{definition}{Definition}[section]
\numberwithin{equation}{section}
\numberwithin{table}{section}
\numberwithin{figure}{section}
\title{On the Largest Prime factor of the $k$-generalized Pell numbers}
\author{Herbert Batte$^{1,*} $}
\date{}
\begin{document}
\maketitle
\abstract{ Let $k \ge 2$ be an integer and consider the $k$-generalized Pell sequence $\{P_n^{(k)}\}_{n \ge 2-k}$, defined by the initial values $0, \ldots, 0, 0, 1$ (a total of $k$ terms), and the recurrence $P_n^{(k)} = 2P_{n-1}^{(k)} + P_{n-2}^{(k)} + \cdots + P_{n-k}^{(k)}$, for all $n\ge 2$.	For any integer $m$, let $\mathcal{P}(m)$ denote the largest prime factor of $m$, with the convention $\mathcal{P}(0) = \mathcal{P}(\pm1) = 1$. In this paper, we prove that for $n \ge 4$, the inequality $\mathcal{P}(P_n^{(k)}) > (1/104) \log \log n$ holds. Additionally, we find all $k$-generalized Pell numbers $P_n^{(k)}$, whose largest prime factor does not exceed $7$.
 } 

{\bf Keywords and phrases}: $k$-generalized Pell numbers; greatest prime factor; linear forms in logarithms; LLL-reduction.

{\bf 2020 Mathematics Subject Classification}: 11B39, 11D61, 11D45, 11Y50.

\thanks{$ ^{*} $ Corresponding author}

\section{Introduction}
\subsection{Background}
Let $k \ge 2$ be a fixed integer. The sequence of $k$-generalized Pell numbers, denoted by $\{P_n^{(k)}\}$, is defined recursively by
\[
P_n^{(k)} = 2P_{n-1}^{(k)} + P_{n-2}^{(k)} + \cdots + P_{n-k}^{(k)}, \qquad \text{for all }\qquad n \ge 2,
\]
with initial terms given by $P_{2-k}^{(k)} = \cdots = P_{-1}^{(k)} = P_0^{(k)} = 0$ and $P_1^{(k)} = 1$. When $k = 2$, the sequence coincides with the classical Pell numbers, and in that case we drop the superscript ${}^{(k)}$ for simplicity. The first few values for $P_n^{(k)}$, for some small values of $k$, are given in Table \ref{tab} below.
\begin{table}[h!]\label{tab}
	\centering
	\caption{First non-zero $k$-Pell numbers}
	\begin{tabular}{|c|c|l|}
		\hline
		$k$ & Name & First non-zero terms \\
		\hline \hline 
		2 & Pell   & 1, 2, 5, 12, 29, 70, 169, 408, 985, 2378, 5741, 13860, 33461, $\ldots$ \\
		3 & 3-Pell & 1, 2, 5, 13, 33, 84, 214, 545, 1388, 3535, 9003, 22929, 58396, $\ldots$ \\
		4 & 4-Pell & 1, 2, 5, 13, 34, 88, 228, 591, 1532, 3971, 10293, 26680, 69156, $\ldots$ \\
		5 & 5-Pell & 1, 2, 5, 13, 34, 89, 232, 605, 1578, 4116, 10736, 28003, 73041, $\ldots$ \\
		6 & 6-Pell & 1, 2, 5, 13, 34, 89, 233, 609, 1592, 4162, 10881, 28447, 74371, $\ldots$ \\
		7 & 7-Pell & 1, 2, 5, 13, 34, 89, 233, 610, 1596, 4176, 10927, 28592, 74815, $\ldots$ \\
		8 & 8-Pell & 1, 2, 5, 13, 34, 89, 233, 610, 1597, 4180, 10941, 28638, 74960, $\ldots$ \\
		9 & 9-Pell & 1, 2, 5, 13, 34, 89, 233, 610, 1597, 4181, 10945, 28652, 75006, $\ldots$ \\
		10 & 10-Pell & 1, 2, 5, 13, 34, 89, 233, 610, 1597, 4181, 10946, 28656, 75020, $\ldots$ \\
		\hline 
	\end{tabular}
\end{table}

For any integer $m$, we denote by $\mathcal{P}(m)$ the largest prime divisor of $m$, adopting the convention that $\mathcal{P}(0) = \mathcal{P}(\pm 1) = 1$. Investigating lower bounds for the largest prime factor in linear recurrence sequences is a problem that has attracted considerable attention in Number theory. Some important contributions have been made in this area (see, for instance, \cite{Batte} and \cite{Brl}). In this paper, we examine the properties of the $k$-generalized Pell numbers with the objective of establishing explicit lower bounds for $\mathcal{P}(P_n^{(k)})$ in terms of the parameters $k$ and $n$. Note that since $P_1^{(k)}=1$, $P_2^{(k)}=2$ and $P_3^{(k)}=5$ for all $k\ge 2$, we have that $\mathcal{P}(P_n^{(k)})$ is either 1, 2 or 5, for all $k\ge 2$ and $n\le 3$.  Therefore, we work with $n\ge 4$.

We prove the following results.
\subsection{Main Results}
\begin{theorem}\label{1.1p} 
	Let $\{P_n^{(k)}\}_{n \ge 2-k}$ be the sequence of $k$-generalized Pell numbers. Then, the inequality 
	\begin{align*}
	\mathcal{P}(P_n^{(k)})>\dfrac{1}{104}\log \log n,
	\end{align*}
holds for all $n\ge 4$ and $k\ge 2$.
\end{theorem}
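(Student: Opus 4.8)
The plan is to run Baker's method on a linear form in logarithms produced by combining the Binet expansion of $P_n^{(k)}$ with the hypothesis that $P_n^{(k)}$ is built only from small primes. First I would collect the standard analytic facts about the sequence: the characteristic polynomial $x^k-2x^{k-1}-\cdots-x-1$ has a unique dominant root $\gamma=\gamma(k)$ with $2<\gamma<\phi^2$, where $\phi=(1+\sqrt5)/2$; a Binet-type formula $P_n^{(k)}=c_k\gamma^{\,n-1}+\big(\text{contribution of the roots of modulus}<1\big)$ with $c_k\in\mathbb{Q}(\gamma)$; the crude bounds $\gamma^{\,n-2}\le P_n^{(k)}\le\gamma^{\,n-1}$; and the exact identity $P_n^{(k)}=F_{2n-1}$ valid for $n\le k+1$ (the odd-indexed Fibonacci numbers visible in Table~\ref{tab}). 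Writing $q=\mathcal P(P_n^{(k)})$ and arguing by contradiction, I would assume $q\le\frac1{104}\log\log n$; the theorem then reduces to deriving $\log\log n<104\,q$ from this assumption, which contradicts it. Under the assumption, $P_n^{(k)}=\prod_{p\le q}p^{a_p}$ is $q$-smooth, has at most $\pi(q)\le q$ distinct prime factors, and each exponent satisfies $a_p\le \log P_n^{(k)}/\log 2<2n$.

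Next I would form the linear form
\[
\Lambda=\sum_{p\le q}a_p\log p-(n-1)\log\gamma-\log c_k ,
\]
which lives in $K=\mathbb{Q}(\gamma)$ of degree $D=[K:\mathbb{Q}]\le k$. Dividing the factorization by $c_k\gamma^{\,n-1}$ and invoking the Binet approximation shows that $|e^{\Lambda}-1|$ is at most a fixed power of $\gamma^{-n}$, whence $\Lambda\neq0$ (the omitted conjugate terms are nonzero, so $P_n^{(k)}\neq c_k\gamma^{\,n-1}$) and $\log|\Lambda|<-c_1 n$ for an absolute $c_1>0$. On the other hand, Matveev's theorem applied to the $m=\pi(q)+2$ logarithms $\log p\ (p\le q)$, $\log\gamma$, $\log c_k$ gives a lower bound of the shape
\[
\log|\Lambda|>-\,C\cdot 30^{\,\pi(q)+5}\,(\pi(q)+2)^{4.5}\,D^{2}(1+\log D)(1+\log n)\prod_{p\le q}\big(D\log p\big),
\]
where I would use $h(\gamma)\le \tfrac1D\log\gamma$ and $h(c_k)=O(\log k)$ to control the two non-prime heights.

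Comparing the two estimates yields $c_1 n<(\text{Matveev bound})$, and the core difficulty is that $D\le k$ is unbounded, so this is not yet uniform in $k$. I would remove the $k$-dependence by a dichotomy. When $k\le c_2\log n$, the factors $D^2$ and $\prod_{p}(D\log p)\le(k\log q)^{\pi(q)}$ are only polynomial in $\log n$, and since the assumption forces $\pi(q)\le q\le\frac1{104}\log\log n$, the whole right-hand side is $n^{o(1)}$; this contradicts $c_1 n$ for large $n$ and quantitatively returns $\log\log n<104\,q$. When $k$ is large, the range $n\le k+1$ is handled by the exact identity $P_n^{(k)}=F_{2n-1}$, which moves the argument into $\mathbb{Q}(\sqrt5)$ with $D=2$; and for the intermediate range $k+2\le n$ with $k$ large I would replace $\gamma$ by $\phi^2$ (legitimate because $\phi^2-\gamma(k)$ is exponentially small in $k$, so $(n-1)\log(\phi^2/\gamma)$ stays controlled) and again work in $\mathbb{Q}(\sqrt5)$. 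In each branch the degree entering Matveev is bounded, so the same comparison produces $\log\log n<104\,q$.

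The hard part is precisely this uniformity in $k$: keeping $\gamma$ gives the exact linear form but unbounded degree, whereas passing to $\phi$ keeps $D=2$ but injects the error $(n-1)\log(\phi^2/\gamma)$, so the threshold $c_2$ separating the regimes must be chosen so that one of the two estimates is always strong enough. The explicit constant $\tfrac1{104}$ is not intrinsic; it is whatever value makes $\log\log n<104\,q$ hold for every $n\ge4$ once Matveev's numerical constant $30^{m+3}$ together with the height and degree factors above are tracked honestly (the underlying estimate is in fact considerably stronger, of order $\log n/\log\log n$, but the doubly-logarithmic form is the clean bound that survives uniformly and renders the small cases automatic, since there $\tfrac1{104}\log\log n<2\le\mathcal P(P_n^{(k)})$). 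Finally I would dispatch the finitely many small $n$ left over from the asymptotic contradiction by the trivial bound $\mathcal P(P_n^{(k)})\ge2$.
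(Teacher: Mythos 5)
Your overall architecture is the same as the paper's: a degree-$k$ Matveev application to $\sum_p a_p\log p-n\log\gamma-\log c_k$, a second degree-$2$ application obtained by replacing $\gamma$ with $\varphi^2$ (and $c_k$ with $(5-\sqrt5)/10$) when $k$ is large, and a splitting according to the relative size of $k$ and $\log n$. The genuine gap is in your large-$k$ branch with $k+2\le n$, where you assert that ``the degree entering Matveev is bounded, so the same comparison produces $\log\log n<104\,q$.'' In that branch the provable smallness of the degree-$2$ form is capped by the substitution error: $|e^{\Lambda'}-1|\lesssim \varphi^{-2n}+n\varphi^{-k}$, so the best upper bound you can certify is roughly $-\log|\Lambda'|\gtrsim \min\{2n,\,k/2\}\log\varphi$; and since this branch has $n\ge k+2$, hence $k<4n$, the minimum is \emph{always} $k/2$. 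Matveev gives $-\log|\Lambda'|\lesssim X\log n$ with $X\approx 30^{\pi(q)}\pi(q)^{4.5}(\log q)^{\pi(q)}$, so the comparison only ever yields $k\lesssim X\log n$ --- it can never bound $n$ directly. This is perfectly compatible with your branch hypothesis $k>c_2\log n$ whenever $X>c_2$, and since $X\to\infty$ with $q$, no fixed constant threshold $c_2$ eliminates this regime (take, e.g., $k\approx\sqrt n$). Attempting to extract the conclusion from the branch hypothesis alone gives $\log\log n<\log k\lesssim \log X+\log\log n$, which is circular. So in the regime $c_2\log n<k<4n$ your proof does not close.

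The paper's resolution is to use the degree-$k$ Matveev bound in the large-$k$ regime as well: its Lemma 3.1, namely $\log n<27s\log s+5s\log k+\log(10s+2k)$, holds for \emph{all} $k\ge2$ and is sublinear in $k$; feeding $\log n\lesssim s\log k$ into $k\lesssim X\log n$ gives $k/\log k\lesssim Xs$, hence an absolute bound $\log k\lesssim s\log s$, and then $\log\log n<\log k\lesssim s\log s<p_s$. This bootstrap is exactly where the doubly-logarithmic shape of the theorem comes from, and it is the step missing from your outline (alternatively one could make your threshold $q$-dependent, of size about $X$, so that $k>c_2(q)\log n$ forces the error term $n\varphi^{-k}$ below $\varphi^{-2n}$; but you present $c_2$ as a constant and supply no such mechanism). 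Two smaller points: your claim $h(c_k)=O(\log k)$ needs justification --- the paper only invokes $h(f_k(\alpha))<4k\log\varphi+k\log(k+1)$, though $O(\log k)$ is in fact provable from the definition because all conjugates of $f_k(\alpha)$ lie in the unit disc; and the nonvanishing of the linear forms should be argued as in the paper (via $f_k(\alpha)$ not being an algebraic integer, using that $\alpha$ is a unit, and via rationality versus irrationality for the degree-$2$ form), not from ``the omitted conjugate terms are nonzero,'' since those terms could in principle cancel.
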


\begin{theorem}\label{1.2p} 
	The only solutions to the Diophantine equation 
	\begin{align}\label{eq1.1p}
	P_n^{(k)}=2^a\cdot 3^b\cdot 5^c\cdot 7^d,
	\end{align}
	 in non-negative integers $n$, $k$, $a$, $b$, $c$, $d$, $e$ with $k \ge 2$ and $n \ge 4$, are 
	 \begin{align*}
	 &P_4^{(2)}=12,\qquad P_6^{(2)}=70,\qquad P_6^{(3)}=84,\qquad\text{and}\qquad P_{10}^{(5)}=4116.
	 \end{align*} 
\end{theorem}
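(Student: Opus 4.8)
The plan is to reduce the equation $P_n^{(k)}=2^a3^b5^c7^d$ to a finite, searchable range by combining an elementary closed form, Carmichael's primitive divisor theorem, Baker's theory of linear forms in logarithms, and LLL-reduction. Throughout I would rely on the standard analytic description of the sequence: the characteristic polynomial $x^k-2x^{k-1}-x^{k-2}-\cdots-1$ has a single dominant real root $\gamma=\gamma(k)$ with $2<\gamma<\tfrac{3+\sqrt5}{2}$ (so that $\log\gamma$, together with the height of $\gamma$ and of the Binet coefficient $c_k$, is bounded uniformly in $k$), a Binet-type formula $P_n^{(k)}=c_k\gamma^{n-1}+(\text{error})$ whose error comes from the roots inside the unit disk and is therefore bounded, and the size estimates $\gamma^{n-2}\le P_n^{(k)}\le\gamma^{n-1}$. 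Since $2^a3^b5^c7^d=P_n^{(k)}<\gamma^{n-1}<3^{\,n}$, every exponent $a,b,c,d$ is $O(n)$, which is the uniform control needed later.

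The first main ingredient is the identity $P_n^{(k)}=F_{2n-1}$, valid for $1\le n\le k+1$, which one proves by checking that the truncated recurrence $P_n=2P_{n-1}+P_{n-2}+\cdots+P_1$ is satisfied by the odd-indexed Fibonacci numbers via $\sum_{j=1}^{n-1}F_{2j-1}=F_{2n-2}$ and $2F_{2n-3}+F_{2n-4}=F_{2n-1}$. Hence for $n\le k+1$ the equation becomes $F_{2n-1}=2^a3^b5^c7^d$, and Carmichael's primitive divisor theorem (every $F_m$ with $m>12$ has a prime factor larger than any factor of the earlier terms, in particular larger than $7$) leaves only finitely many candidates. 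A direct check shows the only $7$-smooth odd-indexed Fibonacci numbers occur for $2n-1\in\{1,3,5\}$, i.e.\ $n\le 3$; thus there is no solution with $n\ge4$ in this regime.

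For the remaining case $n\ge k+2$ I would apply linear forms in logarithms. Rewriting $2^a3^b5^c7^d=c_k\gamma^{n-1}+(\text{error})$ and dividing by the dominant term gives a form
\[
\Lambda=-(n-1)\log\gamma+a\log 2+b\log 3+c\log 5+d\log 7-\log c_k,
\]
for which $|e^{\Lambda}-1|$ is exponentially small in $n$. Matveev's theorem then furnishes a lower bound for $|\Lambda|$ polynomial in $\log n$ (and uniform in $k$, by the height bounds above), and comparing the two estimates yields an explicit but enormous upper bound for $n$ as a function of $k$. The uniformity in $k$ is obtained through the dichotomy that is standard for $k$-generalized sequences: either $n$ is small enough relative to $k$ (roughly $n<2^{k/2}$) that $P_n^{(k)}$ is exponentially close to $F_{2n-1}$, so the Fibonacci smoothness analysis of the previous step again applies; or $n$ is large, in which case one replaces $\log\gamma$ by $\log\tfrac{3+\sqrt5}{2}$ at a controlled cost (using $\gamma\to\tfrac{3+\sqrt5}{2}$ as $k\to\infty$) to produce a second form whose leading data no longer depend on $k$, capping both $n$ and $k$.

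The final step is computational: the Matveev bound is far too large for a direct search, so I would apply LLL-reduction (the generalized Baker--Davenport method) to the associated lattices to bring the bound on $n$ down to a few hundred, reducing the small surviving values of $k$ case by case and the $k$-free form once, after which a finite verification recovers exactly $P_4^{(2)}=12$, $P_6^{(2)}=70$, $P_6^{(3)}=84$ and $P_{10}^{(5)}=4116$. The principal obstacle throughout is the free parameter $k$: it enters the coefficients and the heights of the algebraic numbers appearing in the logarithmic forms, so both the Matveev estimates and the lattice reductions must be organized to be uniform in (or to simultaneously bound) $k$ rather than carried out for a single fixed sequence; managing this uniformity, and choosing the threshold separating the ``Fibonacci-like'' and ``genuinely $k$-generalized'' regimes, is where the real work lies.
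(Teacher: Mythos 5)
Your overall architecture --- the identity $P_n^{(k)}=F_{2n-1}$ plus Carmichael's theorem for $4\le n\le k+1$, Matveev for $n\ge k+2$, a dichotomy to tame the parameter $k$, and LLL to finish --- is the same as the paper's, and your treatment of the range $n\le k+1$ is correct. But the dichotomy, which is the crux of the uniformity problem you yourself identify as "where the real work lies," is wired backwards in both branches, and as stated neither branch closes. In your first branch ($n\ge k+2$ but $n<2^{k/2}$) you claim that since $P_n^{(k)}$ is exponentially close to $F_{2n-1}$, "the Fibonacci smoothness analysis of the previous step again applies." It does not: for $n>k+1$ one has $P_n^{(k)}\ne F_{2n-1}$ (for instance $P_{k+2}^{(k)}=F_{2k+3}-1$), and Carmichael's primitive divisor theorem says nothing about the prime factors of an integer that is merely \emph{near} a Fibonacci number. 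What this regime actually buys --- and what the paper does --- is analytic, not arithmetic: writing $\alpha=\varphi^2-\lambda$ with $0<\lambda<\varphi^{-(k-2)}$, the hypothesis $n<\varphi^{k/2}$ keeps the accumulated error $n\lambda$ small, giving
\begin{equation*}
\Bigl|P_n^{(k)}-\tfrac{5-\sqrt{5}}{10}\,\varphi^{2n}\Bigr|\ll \varphi^{2n}\varphi^{-k/2},
\end{equation*}
so one runs Matveev a \emph{second} time on a linear form whose algebraic numbers $\varphi$ and $10/(5-\sqrt5)$ lie in $\mathbb{Q}(\sqrt5)$ (degree $2$, heights independent of $k$); since $n>k+1$ this bounds $k/2=\min\{2n,k/2\}$, hence $k$, and then $n$ via the $k$-dependent bound.

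Your second branch has the converse defect: for "$n$ large" you propose replacing $\log\gamma$ by $\log\tfrac{3+\sqrt5}{2}$ "at a controlled cost." The cost is controlled only in the \emph{other} regime: $|\gamma^n-\varphi^{2n}|\approx n\lambda\varphi^{2n}$ with $\lambda\approx\varphi^{-(k-2)}$, and once $n\gg\varphi^{k/2}$ (say $k$ fixed and $n\to\infty$) this error dwarfs $\varphi^{2n}$, the resulting expression is not a small linear form, and Matveev yields nothing. The correct observation when $n\ge\varphi^{k/2}$ is the opposite one: this inequality forces $k\le(2/\log\varphi)\log n$, so the $k$-dependent Matveev bound $n\ll k^{7}(\log k)^{3}\log n$ self-terminates, bounding $n$ and $k$ absolutely. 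Relatedly, your opening claim that the height of the Binet coefficient $c_k$ is "bounded uniformly in $k$" is false --- the paper needs $h(f_k(\alpha))\ll k\log k$ and the field degree is $k$, which is precisely why the first Matveev bound depends on $k$ and why the dichotomy is needed at all. You have assembled the right ingredients, but as assigned to the two regimes each branch invokes a tool outside its domain of validity, so the reduction to a finite computation does not go through as written.
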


Our approach can be summarized as follows. 
\begin{enumerate}[(a)]
	\item We utilize lower bounds for linear forms in logarithms of algebraic numbers to derive an upper estimate for $\log n$ in terms of the parameters $k$ and $\mathcal{P}(P_n^{(k)})$. The analysis is more straightforward when $k$ is small. For larger values of $k$, we take advantage of the fact that the dominant root of the $k$-generalized Pell sequence is very close to $\varphi^2$, where $$\varphi := \dfrac{1+\sqrt{5}}{2},$$
	is the golden ratio. This allows us to simplify our computations by approximating this root with $\varphi$ in the relevant logarithmic expressions. 
	\item Moreover, the techniques described here, combined with the use of the LLL-algorithm, are applied in the concluding part of the paper to determine all $k$-generalized Pell numbers whose largest prime factor is at most $7$.
\end{enumerate}

\section{Methods}
\subsection{Some identities on $k$-Pell numbers}
We begin with an identity from \cite{kilis}, which relates $P_n^{(k)}$ with the $m$-th Fibonacci number. It states that
\begin{align*}\label{pnfn}
	P_n^{(k)} =  F_{2n-1},
\end{align*}
for all $1 \le n \le k+1$, while the next term is $P_{k+2}^{(k)} =  F_{2k+3}-1$. The characteristic polynomial of $\{P_n^{(k)}\}_{n\in {\mathbb Z}}$ is given by
\[
\Psi_k(x) = x^k - 2x^{k-1} - \cdots - x - 1.
\]
This polynomial is irreducible in $\mathbb{Q}[x]$ and it possesses a unique real root $\alpha:=\alpha(k)>1$. All the other roots of $\Psi_k(x)$ are inside the unit circle,  see \cite{Herera}. The particular root $\alpha$ can be found in the interval
\begin{align}\label{eq2.3m}
	\varphi^2(1 - \varphi^{-k} ) < \alpha < \varphi^2,\qquad \text{for} \qquad k\ge 2,
\end{align}
as noted in \cite{Herera}. As in the classical case when $k=2$, it was shown in \cite{Herera} that 
\begin{align}\label{pn_bound}
	\alpha^{n-2} \le P_n^{(k)}\le \alpha^{n-1}, \qquad \text{holds for all}\qquad n\ge1 \quad\text{and}\quad k\ge 2.
\end{align}

Next, for $k\ge 2$, we define
\begin{align}\label{eq:fk}
	f_k(x):=\dfrac{x - 1}{(k + 1)x^2 - 3kx + k - 1} = \dfrac{x - 1}{k(x^2 - 3x + 1) + x^2 - 1}.
\end{align}
In Lemma 1 of \cite{BraHer}, Bravo and Herera showed that
\begin{align}\label{fk}
	0.276<f_k(\alpha)<0.5 \qquad\text{and}\qquad |f_k(\alpha_i)|<1,
\end{align}
holds for all $2\le i\le k$, where $\alpha_i$ for $i=2,\ldots,k$ are the roots of $\Psi_k(x)$ inside the unit circle. So, the number $f_k(\alpha)$ is not an algebraic integer. Indeed, if we suppose that $f_k(\alpha)$ is an algebraic integer, then we know that $f_k(\alpha) \neq 0$ and 
$N_{\mathbb{Q}(\alpha)/\mathbb{Q}}(f_k(\alpha)) \in \mathbb{Z}$, where $N_{\mathbb{Q}(\alpha)/\mathbb{Q}}(f_k(\alpha))$ is the norm of $f_k(\alpha)$ from $\mathbb{Q}(\alpha)$ to $\mathbb{Q}$. Therefore, we obtain
\[
1 \leq \left| N_{\mathbb{Q}(\alpha)/\mathbb{Q}}(f_k(\alpha)) \right| = f_k(\alpha) \cdot \prod_{i=2}^{k} \left| f_k(\alpha^{(i)}) \right| < 0.5,
\]
which is absurd. So $f_k(\alpha)$ is not an algebraic integer. Moreover, it was also shown in Theorem 3.1 of \cite{Herera} that
\begin{align}\label{eq2.6m}
	P_n^{(k)}=\displaystyle\sum_{i=1}^{k}f_k(\alpha_i)\alpha_i^{n}\qquad\text{and}\qquad\left|P_n^{(k)}-f_k(\alpha)\alpha^{n}\right|<\dfrac{1}{2},
\end{align}
hold for all $k\ge 2$ and $n\ge 2-k$. The first part of \eqref{eq2.6m} provides a Binet-type representation for \( P_n^{(k)} \). In addition, the second inequality in \eqref{eq2.6m} indicates that the impact of the roots located inside the unit circle on \( P_n^{(k)} \) is small.

Next, let $n\ge 4$ and 
\begin{align}\label{pf}
P_n^{(k)}=p_1^{\beta_1}\cdot p_2^{\beta_2}\cdots p_s^{\beta_s},	
\end{align}
be the prime factorization of the positive integer $P_n^{(k)}$, where $2=p_1<p_2<\cdots <p_s$ is the increasing sequence of prime numbers and the integers $\beta_i$, for $i=1,2,\ldots, s$, are non-negative integers. By the right hand side of relation \eqref{pn_bound}, we have 
\[
\log P_n^{(k)} \le (n-1)\log\alpha < 2(n-1)\log\varphi,
\]
since $\alpha<\varphi^2$, for all $k\ge 2$ in \eqref{eq2.3m}. Therefore, we can write
\begin{align*}
\sum_{i=1}^{s}\beta_i\log p_i=	\log\prod_{i=1}^s p_i^{\beta_i} &  =\log P_n^{(k)}< 2(n-1)\log\varphi,
\end{align*}	
so that
\begin{align*}
\sum_{i=1}^{s}\beta_i\log 2 & \le \sum_{i=1}^{s}\beta_i\log p_i <2(n-1)\log\varphi, \quad \text{since}\quad 2\le p_i, \quad \text{for all}\quad i=1,2,\ldots s.
\end{align*}
Dividing both sides by $\log 2$, we get
$$
\sum_{i=1}^{s}\beta_i <1.4(n-1)<1.4n.
$$
In particular, 
\begin{align}\label{eq2.7p}
	\beta_i <1.4n, 
\end{align}
for all $i=1,2,\ldots s$.

\subsection{Linear forms in logarithms}
To establish our result, we apply a lower bound of Baker type for non-zero linear combinations of logarithms of algebraic numbers. Although various versions of such bounds exist in the literature, we make use of a result due to Matveev as presented in \cite{MAT}. Before stating the relevant inequality, we first recall the definition of the height of an algebraic number.
\begin{definition}\label{def2.1}
	Let $ \gamma $ be an algebraic number of degree $ d $ with minimal primitive polynomial over the integers $$ a_{0}x^{d}+a_{1}x^{d-1}+\cdots+a_{d}=a_{0}\prod_{i=1}^{d}(x-\gamma^{(i)}), $$ where the leading coefficient $ a_{0} $ is positive. Then, the logarithmic height of $ \gamma$ is given by $$ h(\gamma):= \dfrac{1}{d}\Big(\log a_{0}+\sum_{i=1}^{d}\log \max\{|\gamma^{(i)}|,1\} \Big). $$
\end{definition}
In particular, if $ \gamma$ is a rational number represented as $\gamma=p/q$ with coprime integers $p$ and $ q\ge 1$, then $ h(\gamma ) = \log \max\{|p|, q\} $. 
The following properties of the logarithmic height function $ h(\cdot) $ will be used in the rest of the paper without further reference:
\begin{equation}\nonumber
	\begin{aligned}
		h(\gamma_{1}\pm\gamma_{2}) &\leq h(\gamma_{1})+h(\gamma_{2})+\log 2;\\
		h(\gamma_{1}\gamma_{2}^{\pm 1} ) &\leq h(\gamma_{1})+h(\gamma_{2});\\
		h(\gamma^{s}) &= |s|h(\gamma)  \quad {\text{\rm valid for}}\quad s\in \mathbb{Z}.
	\end{aligned}
\end{equation}
With these properties, it was proved in \cite[Lemma 3]{BraHer} that 
\begin{align}\label{eqh}
	h\left(f_k(\alpha)\right)<4k\log \varphi+k\log (k+1), \qquad \text{for all}\qquad k\ge 2.
\end{align}

A linear form in logarithms is an expression
\begin{equation}
	\label{eq:Lambdap}
	\Lambda:=b_1\log \gamma_1+\cdots+b_t\log \gamma_t,
\end{equation}
where for us $\gamma_1,\ldots,\gamma_t$ are positive real  algebraic numbers and $b_1,\ldots,b_t$ are integers. We assume, $\Lambda\ne 0$. We need lower bounds 
for $|\Lambda|$. We write ${\mathbb K}:={\mathbb Q}(\gamma_1,\ldots,\gamma_t)$ and $D$ for the degree of ${\mathbb K}$ over ${\mathbb Q}$.
We give Matveev's inequality from \cite{MAT}. 

\begin{theorem}[Matveev, \cite{MAT}]
	\label{thm:Matp} 
	Put $\Gamma:=\gamma_1^{b_1}\cdots \gamma_t^{b_t}-1=e^{\Lambda}-1$. Then 
	$$
	\log |\Gamma|>-1.4\cdot 30^{t+3}\cdot t^{4.5} \cdot D^2 (1+\log D)(1+\log B)A_1\cdots A_t,
	$$
	where $B\ge \max\{|b_1|,\ldots,|b_t|\}$ and $A_i\ge \max\{Dh(\gamma_i),|\log \gamma_i|,0.16\}$ for $i=1,\ldots,t$.
\end{theorem}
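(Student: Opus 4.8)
The statement to be proved is Matveev's theorem, a deep result in the theory of linear forms in logarithms, so rather than a self-contained argument I can only propose the strategy of Baker's method by which explicit bounds of this type are obtained; the full execution, with its near-optimal dependence on the number of logarithms, is the substance of \cite{MAT}. The plan is a proof by contradiction. After a harmless normalization (we may assume the $\gamma_i$ and the $b_i$ are arranged so that $B=\max_i|b_i|$ is attained in a convenient coordinate), I would suppose that $\log|\Gamma|$ violates the asserted inequality, i.e.\ that $|\Lambda|$, and hence $|\Gamma|=|e^{\Lambda}-1|$, is extraordinarily small relative to $D$, $B$ and the heights $h(\gamma_i)$. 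The smallness of $\Lambda$ says precisely that $b_1\log\gamma_1+\cdots+b_t\log\gamma_t$ is nearly zero, so the $t$ logarithms behave as though almost $\mathbb{Z}$-linearly dependent; the goal is to turn this near-dependence into an actual contradiction.

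The mechanism is an auxiliary-function construction on the torus $\mathbb{G}_m^t$. I would introduce integer parameters (a degree bound $L$ and multiplicity/range parameters $S,T$, to be optimized at the very end) and look for an exponential polynomial
$$\Phi(z)=\sum_{\lambda_1,\ldots,\lambda_t}p(\lambda_1,\ldots,\lambda_t)\,\gamma_1^{\lambda_1 z}\cdots\gamma_t^{\lambda_t z},$$
with coefficients $p(\lambda)$ that are algebraic integers, not all zero, of controlled height. Siegel's lemma (the Thue--Siegel box principle) guarantees such coefficients exist, chosen so that $\Phi$ together with its derivatives up to some order vanishes at the integer points $z=0,1,\ldots,S$; one has more unknowns $p(\lambda)$ than linear vanishing conditions, and the height estimate on the $p(\lambda)$ follows from the bound on the coefficients of the system, which is where the heights $A_i\ge\max\{Dh(\gamma_i),|\log\gamma_i|,0.16\}$ enter. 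The next and decisive phase is extrapolation: because $\Lambda$ is assumed tiny, the quantities $\gamma_i^{\lambda_i z}=e^{\lambda_i z\log\gamma_i}$ can be compared across the near-relation, and a maximum-modulus estimate combined with Hermite-type interpolation shows that $\Phi$ is not merely small but in fact vanishes to high order on a range far larger than the one it was built to annihilate. Iterating this extrapolation amplifies the vanishing, and this is exactly the step that exploits the hypothesis $|\Gamma|$ small.

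Finally one confronts the vanishing against a zero (multiplicity) estimate for $\mathbb{G}_m^t$: an exponential polynomial of the above shape cannot vanish to such high order at so many points unless the $\gamma_i$ satisfy a genuine multiplicative relation, which is excluded (or accounted for) by a Philippon-type zero estimate — the technical heart of the whole argument. Consequently some value $\xi$ of $\Phi$ (or of a minor/derivative) is a nonzero algebraic number; extrapolation forces $|\xi|$ to be minuscule, while the Liouville, or fundamental, inequality forces $|\xi|$ to exceed an explicit negative power of $D$ and the heights. Optimizing $L,S,T$ makes these two bounds incompatible unless $|\Lambda|$ was in fact no smaller than the claimed threshold, which is the theorem. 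The main obstacle — and precisely Matveev's contribution over earlier versions of Baker's method — is carrying out this optimization so as to obtain the \emph{explicit} and essentially best-possible dependence on the number of logarithms, namely the shape $1.4\cdot 30^{t+3}\cdot t^{4.5}$ rather than a constant growing exponentially worse in $t$. Achieving this requires building the auxiliary function from generalized binomial polynomials $\binom{z}{\ell}$, a carefully tuned inductive extrapolation, and the sharp form of the zero estimate; it is here, and not in the formal structure sketched above, that essentially all of the difficulty resides.
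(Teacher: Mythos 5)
You have not produced a proof, and in fairness neither does the paper: Theorem \ref{thm:Matp} is Matveev's theorem, imported verbatim from \cite{MAT} and used as a black box, so there is no internal argument to compare yours against. Judged on its own terms, your submission is an honest and broadly accurate \emph{roadmap} of Baker's method in the form Matveev executes it: assume $\Lambda\ne 0$ (a hypothesis the paper states just before the theorem, and which your contradiction scheme silently uses) yet $|\Lambda|$ smaller than the claimed threshold; build an auxiliary exponential polynomial with algebraic-integer coefficients via Siegel's lemma; use the smallness of $\Lambda$ to extrapolate vanishing to a much larger range; play the amplified vanishing off against a multiplicity (zero) estimate on $\mathbb{G}_m^t$; and close with the Liouville inequality, optimizing the parameters $L,S,T$ at the end. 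Your closing remark that the entire difficulty lies in extracting the explicit constant $1.4\cdot 30^{t+3}t^{4.5}$, rather than in the formal skeleton, is also correct.

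The genuine gap is therefore total on the quantitative side: nothing in the sketch could be checked or made to yield the stated constant --- no parameter choices, no height bookkeeping through Siegel's lemma, no explicit form of the zero estimate, no treatment of the degenerate case where the $\gamma_i$ are multiplicatively dependent (which Matveev must handle separately), and no mention of the Kummer-descent device by which Matveev obtains the good dependence on $D$ and $t$. A referee could not distinguish your outline from one ``proving'' a bound with $30^{t+3}$ replaced by any other function of $t$, which is precisely the content of the theorem. Within a paper of this kind the correct disposition is exactly the one the author takes: state the theorem with the citation to \cite{MAT} and do not reprove it; if you want your write-up to stand as a proof, the only viable route is to reproduce Matveev's argument in full, which is a paper-length undertaking, not a paragraph.
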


During the calculations, upper bounds on the variables are obtained which are too large, thus there is need to reduce them. To do so, we use some results from
approximation lattices and the so-called LLL-reduction method from \cite{LLL}. We explain this in the following subsection.

\subsection{Reduced Bases for Lattices and LLL-reduction methods}\label{sec2.3}
Let $k$ be a positive integer. A subset $\mathcal{L}$ of the $k$-dimensional real vector space ${ \mathbb{R}^k}$ is called a lattice if there exists a basis $\{b_1, b_2, \ldots, b_k \}$ of $\mathbb{R}^k$ such that
\begin{align*}
	\mathcal{L} = \sum_{i=1}^{k} \mathbb{Z} b_i = \left\{ \sum_{i=1}^{k} r_i b_i \mid r_i \in \mathbb{Z} \right\}.
\end{align*}
We say that $b_1, b_2, \ldots, b_k$ form a basis for $\mathcal{L}$, or that they span $\mathcal{L}$. We
call $k$ the rank of $ \mathcal{L}$. The determinant $\text{det}(\mathcal{L})$, of $\mathcal{L}$ is defined by
\begin{align*}
	\text{det}(\mathcal{L}) = | \det(b_1, b_2, \ldots, b_k) |,
\end{align*}
with the $b_i$'s being written as column vectors. This is a positive real number that does not depend on the choice of the basis (see \cite{Cas}, Section 1.2p).

Given linearly independent vectors $b_1, b_2, \ldots, b_k$ in $ \mathbb{R}^k$, we refer back to the Gram-Schmidt orthogonalization technique. This method allows us to inductively define vectors $b^*_i$ (with $1 \leq i \leq k$) and real coefficients $\mu_{i,j}$ (for $1 \leq j \leq i \leq k$). Specifically,
\begin{align*}
	b^*_i &= b_i - \sum_{j=1}^{i-1} \mu_{i,j} b^*_j,~~~
	\mu_{i,j} = \dfrac{\langle b_i, b^*_j\rangle }{\langle b^*_j, b^*_j\rangle},
\end{align*}
where \( \langle \cdot , \cdot \rangle \)  denotes the ordinary inner product on \( \mathbb{R}^k \). Notice that \( b^*_i \) is the orthogonal projection of \( b_i \) on the orthogonal complement of the span of \( b_1, \ldots, b_{i-1} \), and that \( \mathbb{R}b_i \) is orthogonal to the span of \( b^*_1, \ldots, b^*_{i-1} \) for \( 1 \leq i \leq k \). It follows that \( b^*_1, b^*_2, \ldots, b^*_k \) is an orthogonal basis of \( \mathbb{R}^k \). 
\begin{definition}
	The basis $b_1, b_2, \ldots, b_n$ for the lattice $\mathcal{L}$ is called reduced if
	\begin{align*}
		\| \mu_{i,j} \| &\leq \frac{1}{2}, \quad \text{for} \quad 1 \leq j < i \leq n,~~
		\text{and}\\
		\|b^*_{i}+\mu_{i,i-1} b^*_{i-1}\|^2 &\geq \frac{3}{4}\|b^*_{i-1}\|^2, \quad \text{for} \quad 1 < i \leq n,
	\end{align*}
	where $ \| \cdot \| $ denotes the ordinary Euclidean length. The constant $ {3}/{4}$ above is arbitrarily chosen, and may be replaced by any fixed real number $ y $ in the interval ${1}/{4} < y < 1$ (see \cite{LLL}, Section 1).
\end{definition}
Let $\mathcal{L}\subseteq\mathbb{R}^k$ be a $k-$dimensional lattice  with reduced basis $b_1,\ldots,b_k$ and denote by $B$ the matrix with columns $b_1,\ldots,b_k$. 
We define
\[
l\left( \mathcal{L},y\right)= \left\{ \begin{array}{c}
	\min_{x\in \mathcal{L}}||x-y|| \quad  ;~~ y\not\in \mathcal{L}\\
	\min_{0\ne x\in \mathcal{L}}||x|| \quad  ;~~ y\in \mathcal{L}
\end{array}
\right.,
\]
where $||\cdot||$ denotes the Euclidean norm on $\mathbb{R}^k$. It is well known that, by applying the
LLL-algorithm, it is possible to give in polynomial time a lower bound for $l\left( \mathcal{L},y\right)$, namely a positive constant $c_1$ such that $l\left(\mathcal{L},y\right)\ge c_1$ holds (see \cite{SMA}, Section V.4).
\begin{lemma}\label{lem2.5p}
	Let $y\in\mathbb{R}^k$ and $z=B^{-1}y$ with $z=(z_1,\ldots,z_k)^T$. Furthermore, 
	\begin{enumerate}[(i)]
		\item if $y\not \in \mathcal{L}$, let $i_0$ be the largest index such that $z_{i_0}\ne 0$ and put $\sigma:=\{z_{i_0}\}$, where $\{\cdot\}$ denotes the distance to the nearest integer.
		\item if $y\in \mathcal{L}$, put $\sigma:=1$.
	\end{enumerate}
	\noindent Finally, set 
	\[
	c_1:=\max\limits_{1\le j\le k}\left\{\dfrac{||b_1||}{||b_j^*||}\right\}\qquad\text{and}\qquad
	c_2:= c_1^{-1}\sigma||b_1||.
	\]
\end{lemma}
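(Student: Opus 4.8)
The statement to establish is the lower bound $l(\mathcal{L},y)\ge c_2$ that the definitions of $\sigma$, $c_1$ and $c_2$ are designed to produce. The plan is to bound the Euclidean length of an arbitrary relevant lattice vector from below by working in the Gram--Schmidt basis $b_1^*,\dots,b_k^*$ rather than in $b_1,\dots,b_k$: the starred vectors are mutually orthogonal, so once a vector is written in terms of them, Pythagoras turns the length into a sum of squares and a single well-chosen coordinate already delivers the bound.

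First I would fix a competitor. If $y\notin\mathcal{L}$, take any $x=\sum_{i=1}^k p_ib_i\in\mathcal{L}$ with $p_i\in\mathbb{Z}$ and set $v:=x-y$, whose coordinates in $b_1,\dots,b_k$ are $t_i:=p_i-z_i$; if $y\in\mathcal{L}$, take any nonzero $x=\sum_i p_ib_i$ and set $v:=x$ with $t_i:=p_i$. Substituting $b_i=b_i^*+\sum_{l<i}\mu_{i,l}b_l^*$ and letting $j$ be the largest index with $t_j\neq0$, I would observe that $b_j^*$ occurs in $b_i$ only for $i\ge j$, and since $t_i=0$ for $i>j$ the coefficient of $b_j^*$ in the orthogonal expansion of $v$ is exactly $t_j$. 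Orthogonality of the $b_l^*$ then yields $\|v\|^2\ge t_j^2\,\|b_j^*\|^2$, so everything reduces to a lower bound for $|t_j|\,\|b_j^*\|$, together with the identity $c_1^{-1}\|b_1\|=\min_{1\le l\le k}\|b_l^*\|$, which is immediate from $c_1=\|b_1\|/\min_l\|b_l^*\|$.

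The bound on $|t_j|$ splits exactly along the cases built into the lemma. When $y\in\mathcal{L}$ (so $\sigma=1$), $t_j$ is a nonzero integer, hence $|t_j|\ge1$ and $\|v\|\ge\|b_j^*\|\ge\min_l\|b_l^*\|=c_1^{-1}\|b_1\|=c_2$. When $y\notin\mathcal{L}$, I would first note that if $z_{i_0}\in\mathbb{Z}$ then $\sigma=0$ and the bound is vacuous, so I may assume $z_{i_0}\notin\mathbb{Z}$; then $t_{i_0}\neq0$, which forces $j\ge i_0$. If $j>i_0$ then $z_j=0$ makes $t_j$ a nonzero integer, so $\|v\|\ge\min_l\|b_l^*\|\ge\sigma\min_l\|b_l^*\|=c_2$ because $\sigma\le\tfrac12$; if $j=i_0$ then minimising $|p_{i_0}-z_{i_0}|$ over integers $p_{i_0}$ gives $|t_{i_0}|\ge\{z_{i_0}\}=\sigma$, whence $\|v\|\ge\sigma\|b_{i_0}^*\|\ge\sigma\min_l\|b_l^*\|=c_2$. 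Taking the minimum over all admissible $x$ yields $l(\mathcal{L},y)\ge c_2$.

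I expect the only genuinely delicate point to be the bookkeeping in the middle step: one must check that no cancellation occurs at the top index $j$, so that the $b_j^*$-coefficient is honestly $t_j$, and one must extract $|t_{i_0}|\ge\sigma$ correctly from the distance-to-nearest-integer definition. It is worth stressing that the inequality $l(\mathcal{L},y)\ge c_2$ holds for \emph{any} basis of $\mathcal{L}$; the reduced-basis hypothesis is not needed for validity but is what makes $c_2$ a useful bound in applications, since combining $\|\mu_{i,i-1}\|\le\tfrac12$ with $\|b_i^*+\mu_{i,i-1}b_{i-1}^*\|^2\ge\tfrac34\|b_{i-1}^*\|^2$ gives $\|b_i^*\|^2\ge\tfrac12\|b_{i-1}^*\|^2$, hence $\min_l\|b_l^*\|\ge 2^{-(k-1)/2}\|b_1\|$ and $c_1\le 2^{(k-1)/2}$.
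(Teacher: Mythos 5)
Your proof is correct, and you have correctly identified the hidden content of the statement: as printed, Lemma~\ref{lem2.5p} is purely a block of definitions, and the assertion it is meant to carry (the one actually invoked just before Lemma~\ref{lem2.6p}) is the lower bound $l(\mathcal{L},y)\ge c_2$. The paper itself contains no proof of this fact; it is imported from the literature (Section~V.4 of \cite{SMA}, going back to \cite{Weg}), so the only comparison available is with that source, and your argument is essentially the standard one found there: pass to the Gram--Schmidt system, note that at the top index $j$ with $t_j\ne 0$ the coefficient of $b_j^*$ in the orthogonal expansion of $v=x-y$ (resp.\ $v=x$) is exactly $t_j$, apply Pythagoras to get $||v||\ge |t_j|\,||b_j^*||$, and then bound $|t_j|$ below by $1$ or by $\sigma=\{z_{i_0}\}$ according to whether $j>i_0$ or $j=i_0$, finishing with the identity $c_1^{-1}||b_1||=\min_{1\le l\le k}||b_l^*||$. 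All the case bookkeeping is handled correctly, including the degenerate situation $z_{i_0}\in\mathbb{Z}$, where $\sigma=0$ renders the bound vacuous, and the observation that $t_{i_0}\ne 0$ forces $j\ge i_0$. Your closing remark is also a genuine clarification that the paper's presentation obscures: the inequality $l(\mathcal{L},y)\ge c_2$ holds for an arbitrary basis of $\mathcal{L}$, and LLL-reducedness enters only through the estimate $||b_i^*||^2\ge \tfrac{1}{2}||b_{i-1}^*||^2$, which guarantees $c_1\le 2^{(k-1)/2}$ and hence makes $c_2$ a computationally useful (non-tiny) bound in the applications of Section~4.
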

In our application, we are given real numbers $\eta_0,\eta_1,\ldots,\eta_k$ which are linearly independent over $\mathbb{Q}$ and two positive constants $c_3$ and $c_4$ such that 
\begin{align}\label{2.9pp}
	|\eta_0+x_1\eta_1+\cdots +x_k \eta_k|\le c_3 \exp(-c_4 H),
\end{align}
where the integers $x_i$ are bounded as $|x_i|\le X_i$ with $X_i$ given upper bounds for $1\le i\le k$. We write $X_0:=\max\limits_{1\le i\le k}\{X_i\}$. The basic idea in such a situation, due to \cite{Weg}, is to approximate the linear form \eqref{2.9pp} by an approximation lattice. So, we consider the lattice $\mathcal{L}$ generated by the columns of the matrix
$$ \mathcal{A}=\begin{pmatrix}
	1 & 0 &\ldots& 0 & 0 \\
	0 & 1 &\ldots& 0 & 0 \\
	\vdots & \vdots &\vdots& \vdots & \vdots \\
	0 & 0 &\ldots& 1 & 0 \\
	\lfloor C\eta_1\rfloor & \lfloor C\eta_2\rfloor&\ldots & \lfloor C\eta_{k-1}\rfloor& \lfloor C\eta_{k} \rfloor
\end{pmatrix} ,$$
where $C$ is a large constant usually of the size of about $X_0^k$ . Let us assume that we have an LLL-reduced basis $b_1,\ldots, b_k$ of $\mathcal{L}$ and that we have a lower bound $l\left(\mathcal{L},y\right)\ge c_1$ with $y:=(0,0,\ldots,-\lfloor C\eta_0\rfloor)$. Note that $ c_2$ can be computed by using the results of Lemma \ref{lem2.5p}. Then, with these notations the following result  is Lemma VI.1 in \cite{SMA}.
\begin{lemma}[Lemma VI.1 in \cite{SMA}]\label{lem2.6p}
	Let $S:=\displaystyle\sum_{i=1}^{k-1}X_i^2$ and $T:=\dfrac{1+\sum_{i=1}^{k}X_i}{2}$. If $c_2^2\ge T^2+S$, then inequality \eqref{2.9pp} implies that we either have $x_1=x_2=\cdots=x_{k-1}=0$ and $x_k=-\dfrac{\lfloor C\eta_0 \rfloor}{\lfloor C\eta_k \rfloor}$, or
	\[
	H\le \dfrac{1}{c_4}\left(\log(Cc_3)-\log\left(\sqrt{c_2^2-S}-T\right)\right).
	\]
\end{lemma}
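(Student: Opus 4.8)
The plan is to exploit the geometry of the LLL-reduced basis, already quantified in Lemma~\ref{lem2.5p}, and to convert the lattice lower bound $l(\mathcal{L},y)\ge c_2$ into the stated bound on $H$ by isolating the \emph{last} coordinate of the lattice vector nearest to $y$. First I would fix an integer vector $\mathbf{x}=(x_1,\dots,x_k)^T$ with $|x_i|\le X_i$ satisfying \eqref{2.9pp}, and compute the lattice point $\mathcal{A}\mathbf{x}\in\mathcal{L}$ explicitly. From the shape of $\mathcal{A}$ and $y=(0,\dots,0,-\lfloor C\eta_0\rfloor)^T$,
\[
\mathcal{A}\mathbf{x}-y=\Big(x_1,\dots,x_{k-1},\ \lfloor C\eta_0\rfloor+\textstyle\sum_{j=1}^{k}x_j\lfloor C\eta_j\rfloor\Big)^T .
\]
Writing $L:=\lfloor C\eta_0\rfloor+\sum_{j=1}^{k}x_j\lfloor C\eta_j\rfloor$ for the last coordinate, the squared Euclidean norm splits as
\[
\|\mathcal{A}\mathbf{x}-y\|^2=\sum_{i=1}^{k-1}x_i^2+L^2\le S+L^2 .
\]

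Second, I would separate the two alternatives. If $x_1=\dots=x_{k-1}=0$ and in addition $L=0$, then $\lfloor C\eta_0\rfloor+x_k\lfloor C\eta_k\rfloor=0$, which is exactly the degenerate solution $x_k=-\lfloor C\eta_0\rfloor/\lfloor C\eta_k\rfloor$. Otherwise $\mathcal{A}\mathbf{x}\neq y$ (either one of the first $k-1$ coordinates or the last coordinate of $\mathcal{A}\mathbf{x}-y$ is nonzero), so the lower bound $l(\mathcal{L},y)\ge c_2$ furnished by Lemma~\ref{lem2.5p} gives $\|\mathcal{A}\mathbf{x}-y\|\ge c_2$. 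Combining this with the split above yields $c_2^2\le S+L^2$, and since the hypothesis $c_2^2\ge T^2+S$ forces $c_2^2-S\ge T^2>0$, I obtain
\[
|L|\ge\sqrt{c_2^2-S}\ \ge\ T .
\]

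Third, I would relate the integer form $L$ to the real linear form $\Theta:=\eta_0+\sum_{j=1}^{k}x_j\eta_j$ of \eqref{2.9pp}. Since each entry $\lfloor C\eta_j\rfloor$ differs from $C\eta_j$ by at most $\tfrac12$ in absolute value (rounding to the nearest integer), writing $C\eta_j=\lfloor C\eta_j\rfloor+\psi_j$ with $|\psi_j|\le\tfrac12$ gives $L=C\Theta-\big(\psi_0+\sum_{j=1}^{k}x_j\psi_j\big)$, whence
\[
|L|\le C|\Theta|+\tfrac12\Big(1+\textstyle\sum_{j=1}^{k}|x_j|\Big)\le C|\Theta|+T .
\]
Feeding in $|L|\ge\sqrt{c_2^2-S}$ and then the assumed bound $|\Theta|\le c_3\exp(-c_4H)$ from \eqref{2.9pp} yields
\[
\sqrt{c_2^2-S}-T\le C|\Theta|\le Cc_3\exp(-c_4H).
\]
Taking logarithms and solving for $H$ then produces exactly
\[
H\le\frac{1}{c_4}\Big(\log(Cc_3)-\log\big(\sqrt{c_2^2-S}-T\big)\Big),
\]
which is the second alternative; the hypothesis $c_2^2\ge T^2+S$ is used precisely to keep $\sqrt{c_2^2-S}-T\ge0$ so that this final logarithm is well defined.

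The main obstacle, conceptually, is the passage from a lower bound on the \emph{full} Euclidean distance $\|\mathcal{A}\mathbf{x}-y\|$ to a lower bound on the \emph{single} coordinate $L$: this is what forces the introduction of $S=\sum_{i=1}^{k-1}X_i^2$ to absorb the contribution of the first $k-1$ coordinates, and of $T=\tfrac12(1+\sum_{i=1}^{k}X_i)$ to record the rounding error of at most $\tfrac12$ per coordinate. The genuinely deep input, namely that an LLL-reduced basis furnishes a computable lower bound $l(\mathcal{L},y)\ge c_2$ in polynomial time, is already packaged in Lemma~\ref{lem2.5p}; conditional on it, everything that remains is careful bookkeeping of these two error terms together with the degenerate case.
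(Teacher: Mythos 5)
The paper never proves this statement at all: it is imported verbatim as Lemma VI.1 of \cite{SMA}, so the only meaningful comparison is with the standard argument behind that result, and your proposal reproduces that argument correctly. The decomposition of $\mathcal{A}\mathbf{x}-y$ into the first $k-1$ coordinates (absorbed by $S$) and the last coordinate $L$, the use of the lower bound $l(\mathcal{L},y)\ge c_2$ coming from Lemma \ref{lem2.5p} to force $|L|\ge\sqrt{c_2^2-S}\ge T$ once $\mathcal{A}\mathbf{x}\ne y$, the identification of the case $\mathcal{A}\mathbf{x}=y$ with the degenerate alternative $x_1=\cdots=x_{k-1}=0$, $x_k=-\lfloor C\eta_0\rfloor/\lfloor C\eta_k\rfloor$, and the comparison $|L|\le C|\Theta|+T$ followed by \eqref{2.9pp} are exactly the steps of de Weger's approximation-lattice method as packaged by Smart; conditional on Lemma \ref{lem2.5p}, your bookkeeping is complete and the conclusion follows.

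Three small points are worth making explicit. First, when you pass from $\mathcal{A}\mathbf{x}\ne y$ to $\|\mathcal{A}\mathbf{x}-y\|\ge c_2$, you should distinguish the two branches in the definition of $l(\mathcal{L},y)$: if $y\in\mathcal{L}$ (precisely the situation in which the degenerate alternative can occur), $l(\mathcal{L},y)$ is the length of the shortest \emph{nonzero} lattice vector, and the bound applies to $\mathcal{A}\mathbf{x}-y$ because it is then a nonzero element of $\mathcal{L}$; your ``otherwise'' sentence uses this silently. Second, your estimate $|L-C\Theta|\le\tfrac12\bigl(1+\sum_{j=1}^{k}|x_j|\bigr)\le T$ is valid only if $\lfloor\cdot\rfloor$ is read as rounding to the nearest integer, as you parenthetically assume; with the literal floor function each rounding error lies in $[0,1)$ and one only obtains the bound $2T$, which is incompatible with the factor $\tfrac12$ in the definition of $T$. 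Your reading is the intended one (it is what \cite{SMA} uses), but since the paper's notation is the floor bracket, this deserves an explicit remark rather than a parenthesis. Third, at the boundary $c_2^2=T^2+S$ the quantity $\sqrt{c_2^2-S}-T$ vanishes and its logarithm is undefined, so strictly one should either assume strict inequality or read the second alternative as vacuous there. None of these affects the correctness of your argument.
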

Finally, we present an analytic argument which is Lemma 7 in \cite{GL}.  
\begin{lemma}[Lemma 7 in \cite{GL}]\label{Guzp} If $ m \geq 1 $, $T > (4m^2)^m$ and $T > \displaystyle \frac{x}{(\log x)^m}$, then $$x < 2^m T (\log T)^m.$$	
\end{lemma}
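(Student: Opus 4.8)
The plan is to treat Lemma \ref{Guzp} as a purely analytic ``unwinding'' statement: the hypothesis $T>x/(\log x)^m$ says precisely that $g(x)<T$ for the function $g(u):=u/(\log u)^m$, and the goal is to convert this implicit bound (in which $x$ also sits inside the logarithm) into the explicit bound $x<x_0$, where $x_0:=2^mT(\log T)^m$. The essential tool is the monotonicity of $g$: a direct computation gives $g'(u)=(\log u-m)/(\log u)^{m+1}$, so $g$ is strictly decreasing on $(1,e^m)$ and strictly increasing on $(e^m,\infty)$. I would argue by contradiction.

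Suppose $x\ge x_0$. First I would check that $x_0$ already lies in the increasing range, i.e. $x_0>e^m$: since $T>(4m^2)^m$ and $\log T>m\log 4>m\ge 1$, we get $x_0=2^mT(\log T)^m>2^mT>2^m(4m^2)^m\ge 8^m>e^m$. Hence $x\ge x_0>e^m$, and by monotonicity of $g$ on $(e^m,\infty)$ it would follow that $g(x)\ge g(x_0)$. It therefore suffices to prove the single inequality $g(x_0)\ge T$, for then $g(x)\ge g(x_0)\ge T$ contradicts the hypothesis $g(x)<T$, forcing $x<x_0$ as desired. This contradiction scheme automatically disposes of any ``small $x$'' case without a separate argument.

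It remains to verify $g(x_0)\ge T$. Writing this out and cancelling the common factor $T$, the inequality is equivalent to $2^m(\log T)^m\ge(\log x_0)^m$; taking $m$-th roots (all quantities are positive) and expanding $\log x_0=m\log 2+\log T+m\log\log T$, it reduces to the clean requirement $\log T\ge m\log 2+m\log\log T$. Setting $L:=\log T$ and $\phi(L):=L-m\log L-m\log 2$, I would show $\phi(L)\ge 0$ on the range allowed by the hypothesis. Since $\phi'(L)=1-m/L>0$ for $L>m$, and since $T>(4m^2)^m$ gives $L>2m\log(2m)>m$, the function $\phi$ is increasing on the relevant interval, so it is enough to check $\phi$ at the left endpoint $L_*:=2m\log(2m)$. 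A short simplification collapses $\phi(L_*)\ge 0$ to $\log m\ge\log\log(2m)$, equivalently $m\ge\log(2m)$, which holds for all $m\ge 1$ because $m-\log(2m)$ equals $1-\log 2>0$ at $m=1$ and has nonnegative derivative $1-1/m$ thereafter.

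The main obstacle here is not any single hard idea but the bookkeeping in this final chain of elementary estimates: one must keep the direction of each inequality straight through the $m$-th-root and logarithm steps, and confirm that the specific threshold $T>(4m^2)^m$ is exactly what makes $\phi(L_*)\ge 0$ reduce to the true statement $m\ge\log(2m)$ --- in particular that the argument does not degenerate at the boundary case $m=1$, where $\log m=0$ while $\log\log(2m)=\log\log 2<0$.
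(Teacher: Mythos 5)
Your proof is correct; I checked each step. The derivative formula $g'(u)=(\log u-m)/(\log u)^{m+1}$ is right, the bound $x_0=2^mT(\log T)^m>2^m(4m^2)^m=(8m^2)^m\ge 8^m>e^m$ does place $x_0$ in the increasing range of $g$, the reduction of $g(x_0)\ge T$ to $\phi(\log T)\ge 0$ with $\phi(L):=L-m\log L-m\log 2$ is legitimate (the $m$-th-root step is safe because $2\log T$ and $\log x_0$ are both positive), and the endpoint computation works out exactly as you say: with $L_*:=2m\log(2m)$ one gets $\phi(L_*)=m\left(\log m-\log\log(2m)\right)$, which is nonnegative precisely when $m\ge\log(2m)$, true for all $m\ge 1$. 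Since $\phi'(L)=1-m/L>0$ for $L>m$ and $L_*>m$, the hypothesis $\log T>L_*$ then gives $\phi(\log T)>0$, and the contradiction closes. Your observation that the contradiction scheme automatically handles small $x$ is also sound, since the assumption $x\ge x_0>e^m>1$ keeps every logarithm positive and well-defined.

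There is, however, no proof in the paper to compare yours against: the statement is imported verbatim as Lemma 7 of \cite{GL} and used as a black box (repeatedly, in Sections 3 and 4), so the citation is the paper's entire justification. Your argument is thus a correct, self-contained replacement for that citation. For comparison, the way such estimates are usually derived in the literature runs in the opposite direction: starting from $x<T(\log x)^m$, one takes logarithms and absorbs the term $m\log\log x$ into $\frac{1}{2}\log x$, obtaining $\log x<2\log T$ and hence $x<T(2\log T)^m$. You instead invert $g(u)=u/(\log u)^m$ by monotonicity and test the single point $x_0$. Both routes are elementary and of similar length, but yours has the pleasant feature of isolating the exact role of the threshold $T>(4m^2)^m$ in one clean inequality, $m\ge\log(2m)$, including the boundary case $m=1$ that you correctly flag as non-degenerate.
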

Python is used to perform all computations in this work.

\section{Proof of Theorem \ref{1.1p}.}\label{Sec3p}
In this section, we prove Theorem \ref{1.1p}. To do this, we first obtain bounds on $n$ in terms of $s$ and $k$. 
\subsection{An upper bound on $n$ in terms of $s$ and $k$.}\label{subsec3.1p}
From our earlier deduction that $\mathcal{P}(P_n^{(k)})$ is either 1, 2 or 5, for all $k\ge 2$ and $n\le 3$, we can  assume that $n\ge 4$ and $k\ge 2$. Moreover, in an investigation to determine the $k$-Pell numbers which are close to a power of 2, it was shown in \cite{Bach} that the the only powers of 2 in the $k$-Pell sequence are $P_1^{(k)}=1$ and $P_2^{(k)}=2$. So we may also assume $s\ge 2$. We prove the following result.
\begin{lemma}\label{lem3.1p}
Let $n \geq 4$, $k\ge 2$, $s\ge 2$ and $P_n^{(k)}$ have the prime factorization in \eqref{pf}. Then
	\[ \log n < 27s \log s + 5s \log k +  \log(10s+2k). \]
	
\end{lemma}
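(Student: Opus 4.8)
The plan is to combine the Binet-type approximation of $P_n^{(k)}$ from \eqref{eq2.6m} with the prime factorization \eqref{pf} to build a linear form in logarithms, and then apply Matveev's Theorem~\ref{thm:Matp} to bound $\log n$. Starting from $\left|P_n^{(k)}-f_k(\alpha)\alpha^n\right|<1/2$ and writing $P_n^{(k)}=p_1^{\beta_1}\cdots p_s^{\beta_s}$, I would divide through by $f_k(\alpha)\alpha^n$ to obtain an estimate of the shape
\begin{align*}
\left|p_1^{\beta_1}\cdots p_s^{\beta_s}\cdot f_k(\alpha)^{-1}\alpha^{-n}-1\right|<\frac{1}{2f_k(\alpha)\alpha^n}<\frac{\alpha^{-n}}{0.55},
\end{align*}
using the lower bound $f_k(\alpha)>0.276$ from \eqref{fk}. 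The quantity on the left is exactly $|\Gamma|$ for the linear form $\Lambda$ with base numbers $\gamma_1,\ldots,\gamma_s=p_1,\ldots,p_s$, together with $\gamma_{s+1}=f_k(\alpha)$ and $\gamma_{s+2}=\alpha$, and exponent vector $(\beta_1,\ldots,\beta_s,-1,-n)$; so here $t=s+2$.

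Next I would assemble the parameters for Matveev. The field is $\mathbb{K}=\mathbb{Q}(\alpha)$ with $D=k$. For the rational primes $p_i$ we have $h(p_i)=\log p_i\le \log p_s$, and since $P_n^{(k)}\le\alpha^{n-1}<\varphi^{2n}$ the largest prime $p_s$ and each $\log p_i$ are controlled; more crudely each $A_i$ can be taken proportional to $k\log p_s$. For $\gamma_{s+1}=f_k(\alpha)$ the height bound \eqref{eqh} gives $h(f_k(\alpha))<4k\log\varphi+k\log(k+1)$, and for $\gamma_{s+2}=\alpha$ we use $h(\alpha)=\tfrac12\log\alpha<\log\varphi$ since $\alpha<\varphi^2$. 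The bound $B$ on the exponents comes from \eqref{eq2.7p}, namely $\beta_i<1.4n$ and $n>1$, so $B:=1.4n$ works. Feeding these into Matveev yields $\log|\Gamma|>-C(s)\cdot k^2(1+\log k)(1+\log(1.4n))\cdot A_1\cdots A_t$, where the product of the $A_i$ is roughly $(k\log p_s)^s\cdot k^2$ up to explicit constants and logarithmic factors in $k$. Comparing this with the upper bound $\log|\Gamma|<-n\log\alpha+\mathrm{const}\approx -n\cdot 2\log\varphi(1-\varphi^{-k})+\mathrm{const}$, and isolating $n$, produces an inequality of the form $n<(\text{explicit})\cdot \log n\cdot(\text{product of }A_i)$. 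Taking logarithms and absorbing all the multiplicative constants, the $30^{t+3}t^{4.5}$ factor, the powers of $k$, and the $\log\log n$ terms into the stated coefficients $27s\log s$, $5s\log k$, and $\log(10s+2k)$ should give the claimed bound on $\log n$.

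The main obstacle will be bookkeeping: turning Matveev's somewhat opaque constant $1.4\cdot 30^{t+3}t^{4.5}D^2(1+\log D)$ with $t=s+2$, $D=k$ into the clean linear-in-$s$ coefficients $27s$ and $5s$ of the lemma. This requires carefully estimating $\log\left(30^{s+5}(s+2)^{4.5}\right)$ and $\log\left((k\log p_s)^s\right)$, and crucially using the trivial bound $p_s\le P_n^{(k)}<\varphi^{2n}$ (so $\log p_s<2n\log\varphi$) only where it does not reintroduce a factor of $n$ that cannot be absorbed—the key point being that $\log p_s$ enters \emph{inside} a logarithm after the final rearrangement, so its growth in $n$ contributes only a $\log\log n$-type term. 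I would also need to invoke the lower bound $\alpha>\varphi^2(1-\varphi^{-k})$ from \eqref{eq2.3m} to control $\log\alpha$ from below uniformly in $k$; for large $k$ this is essentially $2\log\varphi$, while for small $k$ it remains bounded away from zero, so no degeneracy occurs. The final step is purely algebraic: collect all contributions, bound each constant from above by the round numbers appearing in the statement, and verify the inequality holds for every admissible $n\ge4$, $k\ge2$, $s\ge2$.
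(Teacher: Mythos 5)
Your setup is essentially the paper's: the same quantity $\Gamma_1=p_1^{\beta_1}\cdots p_s^{\beta_s}\cdot\alpha^{-n}\cdot f_k(\alpha)^{-1}-1$, the same Matveev data $t=s+2$, $D=k$, $B=1.4n$, the height bound \eqref{eqh} for $f_k(\alpha)$, and the lower bound $f_k(\alpha)>0.276$ from \eqref{fk}. However, there is a genuine gap at the step you yourself flag as ``crucial'': the treatment of $\log p_s$. You propose the trivial bound $p_s\le P_n^{(k)}<\varphi^{2n}$, arguing that $\log p_s$ ``enters inside a logarithm after the final rearrangement'' and so contributes only a $\log\log n$-type term. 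That is not the case. Matveev's theorem inserts $A_1\cdots A_s=(k\log p_s)^s$ \emph{multiplicatively} into the lower bound for $\log|\Gamma_1|$, so comparing with \eqref{eq3.2p} gives an inequality of the shape
\begin{align*}
n<C(s,k)\,(\log p_s)^s\,\log n,
\end{align*}
and after taking logarithms the contribution of $p_s$ is $s\log\log p_s$, not $\log\log p_s$. With your bound $\log p_s<2n\log\varphi$ this term is essentially $s\log n\ge 2\log n$, so the rearranged inequality reads $\log n<(\text{const})+s\log n+\log\log n$, which is vacuous; the same collapse occurs if one first applies Lemma \ref{Guzp}, since then $T\gtrsim n^s$ and the conclusion $n<2T\log T$ says nothing. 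The paper avoids this by exploiting the convention in \eqref{pf} that $2=p_1<p_2<\cdots<p_s$ is the increasing sequence of \emph{all} primes, so that $p_s$ is the $s$-th prime; the Rosser--Schoenfeld estimate $p_m<m^2$ for $m\ge 2$ (\cite{Ros}) then gives $\log p_s<2\log s$, hence $30^s(\log p_s)^s<(60\log s)^s$, whose logarithm $s\log(60\log s)$ is what gets absorbed into the coefficient $27s\log s$. Some bound on $p_s$ that is polynomial in $s$ is indispensable here: without it no inequality involving only $s$ and $k$, as in the statement of the lemma, can be reached.

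A secondary slip: you take $h(\alpha)=\tfrac12\log\alpha$. Since $\alpha$ has degree $k$ over $\mathbb{Q}$, with monic minimal polynomial $\Psi_k$ and all conjugates inside the unit circle, in fact $h(\alpha)=\tfrac1k\log\alpha<\tfrac1k$, which is exactly what allows the paper to take $A_{s+1}:=1$. With your value one gets $Dh(\alpha)\approx \tfrac{k}{2}\log\alpha$, i.e.\ an extra factor of order $k$ in the Matveev product, turning $k^{4+s}$ into roughly $k^{5+s}$; the paper's step ``$7+s<5s$ for all $s\ge 2$'' then becomes $8+s\le 5s$, which is tight at $s=2$. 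This is repairable, but combined with your cruder final absorption (taking logarithms of $n<C\log n$ directly, rather than using Lemma \ref{Guzp} to get $n<2T\log T$) it is unlikely you would land on the stated constants $27$, $5$ and $10s+2k$.
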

\begin{proof}
By the prime factorization of $P_n^{(k)}$ in \eqref{pf} and the relation in \eqref{eq2.6m}, we have 
\begin{align}\label{eq3.1p}
	\left|p_1^{\beta_1} \cdots p_s^{\beta_s}-f_k(\alpha)\alpha^{n}\right|<\dfrac{1}{2}.
\end{align}
Dividing both sides by $f_k(\alpha)\alpha^{n}$, which is positive because $\alpha>1$, we get
\begin{align}\label{eq3.2p}
	\left|p_1^{\beta_1} \cdots p_s^{\beta_s}\cdot\alpha^{-n}\cdot(f_k(\alpha))^{-1}-1\right|&<\dfrac{1}{2f_k(\alpha)\alpha^{n}}
	<\dfrac{1.82}{\alpha^{n}},
\end{align}
where in the second inequality, we used the fact that $f_k(\alpha)>0.276$ from \eqref{fk}. Let 
$$
\Gamma_1:=p_1^{\beta_1} \cdots p_s^{\beta_s}\cdot\alpha^{-n}\cdot(f_k(\alpha))^{-1}-1=e^{\Lambda_1}-1.
$$
Clearly, $\Gamma_1\ne 0$, otherwise we would have
\begin{align*}
f_k(\alpha) = p_1^{\beta_1} \cdots p_s^{\beta_s}\cdot\alpha^{-n}.
\end{align*}
The above equation shows that $f_k(\alpha)$ is an algebraic integer, which is not true. So, $\Gamma_1\ne 0$. 

The algebraic number field containing the following $\gamma_i$'s is $\mathbb{K} := \mathbb{Q}(\alpha)$. We have $D = k$, $t := s+2$,
\begin{equation}\nonumber
	\begin{aligned}
		\gamma_{i}&:=p_i~~ \text{for}~i=1,2,\ldots, s, ~~~\gamma_{s+1}:=\alpha,~~~\gamma_{s+2}:=f_k(\alpha),\\
		b_{i}&:=\beta_i~~ \text{for}~i=1,2,\ldots, s, ~~~b_{s+1}:=-n, ~~b_{s+2}:=-1.
	\end{aligned}
\end{equation}
Since $h(\gamma_{i})=\log p_i \le \log p_s$ for all $i=1,2,\ldots, s$, we take $A_i:=k \log p_s$ for all $i=1,2,\ldots, s$.  Additionally, $h(\gamma_{s+1})=h(\alpha) < (\log \alpha)/k<(2\log \varphi)/k < 1/k $, so we take $A_{s+1}:=1$. Lastly, 
\[
h(\gamma_{s+2}) = h\left(f_k(\alpha)\right) <  4k \log \varphi + k \log(k+1)  < 4.5k \log k,
\] 
by relation \eqref{eqh}. Hence, we take $A_{s+2}:=4.5k^2\log k$.

Next, $B \geq \max\{|b_i|:i=1,2,\ldots, s,\ldots, s+2\}$. Notice that $b_i=\beta_i<1.4n$, for all $i=1,2,\ldots, s$ by relation  \eqref{eq2.7p}, so we take $B:=1.4n$. Now, by Theorem \ref{thm:Matp},
\begin{align}\label{eq3.4p}
	\log |\Gamma_1| &> -1.4\cdot 30^{s+5} \cdot(s+2)^{4.5}\cdot k^2 (1+\log k)(1+\log (1.4n))\cdot (k\log p_s)^s\cdot 1\cdot 4.5k^2\log k\nonumber\\
	&= -1.4\cdot 30^{s}\cdot 30^5 \cdot s^{4.5}\cdot \left(1+\dfrac{2}{s}\right)^{4.5}\cdot k^2\log k \left(1+\dfrac{1}{\log k}\right)\cdot \log n\left(\dfrac{1}{\log n}+\dfrac{\log 1.4}{\log n}+1\right)\nonumber\\
	&\qquad\cdot k^s(\log p_s)^s\cdot 4.5k^2\log k\nonumber\\
	&> -1.7\cdot 10^{10} \cdot 30^s s^{4.5}k^{4+s}(\log k)^2(\log p_s)^s \log n.
\end{align}
In the above computation, we have used the fact that $n \geq 4$, $k\ge 2$ and $s\ge 2$. Comparing \eqref{eq3.2p} and \eqref{eq3.4p}, we get
\begin{align*}
	n\log \alpha-\log 1.82 &< 1.7\cdot 10^{10} \cdot 30^s s^{4.5}k^{4+s}(\log k)^2(\log p_s)^s \log n,
\end{align*}
which leads to
\begin{align}\label{eq3.5p}
	n&<4\cdot 10^{10} \cdot 30^s s^{4.5}k^{4+s}(\log k)^2(\log p_s)^s \log n.
\end{align}
We also make use of the bound $p_m < m^2$ valid for all $m \ge 2$, which is a consequence of Corollary to Theorem~3, found on page~69 of \cite{Ros}. Incorporating this estimate, inequality~\eqref{eq3.5p} transforms into
\begin{align*}
	\dfrac{n}{\log n}&<4\cdot 10^{10} \cdot  s^{4.5}k^{4+s}(\log k)^2(60\log s)^s  .
\end{align*}
By Lemma \ref{Guzp} with $x:=n$, $m:=1$ and $T:=4\cdot 10^{10} \cdot  s^{4.5}k^{4+s}(\log k)^2(60\log s)^s >(4m^2)^m=4$, we have 
\begin{align*}
	n&<2^1\cdot 4\cdot 10^{10} \cdot  s^{4.5}k^{4+s}(\log k)^2(60\log s)^s \left(\log (4\cdot 10^{10} \cdot  s^{4.5}k^{4+s}(\log k)^2(60\log s)^s)\right)^1\nonumber\\
	&< 8\cdot 10^{10}  s^{4.5}(\log k)^2 k^{4+s}(60\log s)^s \left(\log (4\cdot 10^{10})+4.5\log  s+(4+s)\log k+2\log\log k+s\log(60\log s)\right)\nonumber\\
	&< 8\cdot 10^{10}  s^{4.5}k^{6+s}(60\log s)^s \cdot ks\left(\dfrac{25}{ks}+\dfrac{4.5}{ks}\log  s+\left(\dfrac{1}{k}+\dfrac{4}{ks}\right)\log k+\frac{2\log\log k}{ks}+\dfrac{\log(60\log s)}{k}\right)\nonumber\\
	&< 8\cdot 10^{10}  s^{4.5}k^{6+s}(60\log s)^s \cdot ks\left(\dfrac{13}{s}+\dfrac{2.5}{s}\log  s+\left(0.5+\dfrac{2}{s}\right)\log k+\frac{0.2}{s}+\dfrac{\log(60\log s)}{2}\right)\nonumber\\
	&<8\cdot 10^{10}  s^{5.5}k^{7+s}(60\log s)^s (10s+2k),
\end{align*}
where we have used the fact that $\log k<k$, $\log s<s$ and $(0.5+2/s)\log k<2k$ for $k\ge 2$, $s\ge 2$ and 
$$13/s+(2.5/s)\log s+(\log(60\log s))/s+0.2/s<10s\qquad {\text{\rm for all}}\qquad s\ge 2.
$$
Therefore,
\begin{align*}
	n	&<8\cdot 10^{10} s^{5.5}k^{7+s}(60\log s)^s (10s+2k),
\end{align*}
and hence
\begin{align*}
	\log n	&<\log (8\cdot 10^{10})+5.5\log s +(7+s)\log k +s\log(60\log s)+\log (10s+2k)\\
	&<25.2+5.5\log s +s\log(60\log s)+5s\log k+\log (10s+2k), ~~\text{since}~ 7+s<5s ~~\text{for all}~s\ge 2,\\
	&=s \log s \left(\dfrac{25.2}{s\log s}+\dfrac{5.5}{s}+\dfrac{\log(60\log s)}{\log s}\right)+ 5s \log k +  \log(10s+2k),\\
	&<27s \log s + 5s \log k +  \log(10s+2k).
\end{align*}
This completes the proof of Lemma \ref{lem3.1p}.
\end{proof}
To move forward, note that whenever $k \le s$, we can invoke Lemma \ref{lem3.1p}, which yields
\begin{align*}
	\log n	&<27s \log s + 5s \log s +  \log(12s)<37s\log s,
\end{align*}
for $s\ge 2$. By the classical estimate $p_s > s\log s$, stated as relation (3.12) on page 69 of \cite{Ros}, it follows that
\begin{align}\label{ps0}
p_s>s\log s>(1/37)\log n.
\end{align}
Henceforth, we proceed under the assumption that $s < k$. Under this condition, the assertion of Lemma \ref{lem3.1p} takes the form
\begin{align}\label{eq3.6p}
	\log n	&<32s \log k  +  \log(12k)<37s\log k,
\end{align}
for $s\ge 2$ and $k\ge 2$. We proceed by distinguishing between two cases.

\subsection{The case $n\ge \varphi^{k/2}$}

Here, we have that $(k/2)\log \varphi \le \log n$,
so that 
\begin{align*}
k \le \dfrac{2}{\log \varphi}\log n<\dfrac{2}{\log \varphi}\cdot 37s\log k<154s\log k.	
\end{align*}
Therefore, $k/\log k < 154s$, from which we apply Lemma \ref{Guzp} and get
\begin{align}\label{eq3.7p}
	k<2547s\log s\qquad\text{and hence}\qquad \log k < 14\log s,
\end{align}
both of which hold for $s\ge 2$. 

Finally, we again use Lemma \ref{lem3.1p} together with the relations in \eqref{eq3.7p} to conclude that 
\begin{align*}
	\log n&<  27s \log s + 5s \cdot 14\log s +  \log(10s+2\cdot 2547s\log s)<104s\log s,
\end{align*}
for $s\ge 2$. As a result, we get 
\begin{align}\label{ps1}
p_s > s \log s > (1/104) \log n	.
\end{align}

\subsection{The case $n< \varphi^{k/2}$}\label{subsec3.3p}
Assume we have a real number $\lambda > 0$ such that $\alpha + \lambda = \varphi^2$. Since $\varphi^2(1 - \varphi^{-k} ) < \alpha < \varphi^2$, then 
$$\lambda =\varphi^2-\alpha < \varphi^2 - \varphi^2(1 - \varphi^{-k} ) = 1/\varphi^{k-2}.$$
That is, $\lambda \in (0, 1/\varphi^{k-2})$. On the other hand,	
	\begin{align*}
		\alpha^{n} &= (\varphi^2 - \lambda)^{n} = \varphi^{2n} \left(1 - \frac{\lambda}{\varphi^2}\right)^{n} 
		= \varphi^{2n}\left(e^{\log(1-\lambda/\varphi^2)}\right)^{n}\\ 
		&\ge \varphi^{2n}e^{-2(\lambda/\varphi^2) n} > \varphi^{2n}e^{-\lambda n}
		\ge  \varphi^{2n}(1-\lambda n) ,
	\end{align*}
where we used the fact that $\log(1 - x) > -2x$ for all $x < 1/2$ and $e^{-x} \ge 1 - x$ for all $x \in \mathbb{R}$. In fact, for us, we have $x:=\lambda/\varphi^2<(1/\varphi^{k-2})/\varphi^2=1/\varphi^k<0.5$ for all $k\ge 2$. 
	
Furthermore, 
$$\lambda n < \dfrac{n }{\varphi^{k-2} }< \dfrac{\varphi^{k/2}}{\varphi^{k-2}} = \varphi^{2-k/2},$$
implying that $\alpha^{n} > \varphi^{2n}(1-\lambda n)>\varphi^{2n}(1-\varphi^{2-k/2})$.
Again, since $\varphi^2(1 - \varphi^{-k} ) < \alpha < \varphi^2$, it follows that
	\[
	\varphi^{2n}-\dfrac{\varphi^{2n}}{\varphi^{k/2-2}}=\varphi^{2n}(1-\varphi^{2-k/2}) < \alpha^{n} < \varphi^{2n} <\varphi^{2n}+ \dfrac{\varphi^{2n}}{\varphi^{k/2-2}}.
	\]
Therefore
	\begin{equation} \label{eq3.9p}
		\left| \alpha^{n} -\varphi^{2n}\right| < \dfrac{\varphi^{2n}}{\varphi^{k/2-2}}.
	\end{equation}
We now turn our attention to the function $f_k(x)$ defined in~\eqref{eq:fk}. By applying the Mean Value Theorem, there exists a point $\omega \in (\alpha, \varphi^2)$ such that 
\[
f_k(\alpha) = f_k(\varphi^2) + (\alpha - \varphi^2) f_k'(\omega).
\]
It is worth noting that for all $k \geq 2$, we obtain
\begin{align*}
	|f_k'(\omega)| &= \dfrac{(k+1)(2\omega-\omega^2-1)-k}{((k + 1)\omega^2 - 3k\omega + k - 1)^2 } 
	< \dfrac{2\omega(k+1)}{((k + 1)\omega^2 - 3k\omega + k - 1 )^2}\\
	&< \dfrac{5.3(k+1)}{(2.5921(k + 1) - 7.86k + k - 1 )^2},\quad\text{since}\quad\omega \in (\alpha, \varphi^2)\subset (\varphi^2(1 - \varphi^{-k} ),\varphi^2)\subset(1.61,2.62),\\
	&<k.
\end{align*}
In fact, the last inequality follows directly because the function $5.3(k+1)/(2.5921(k + 1) - 7.86k + k - 1 )^2$ is at most 0.5 and is reducing for all $k\ge 2$. Hence,
	\begin{equation} \label{eq3.10p}
		\left|f_k(\alpha) - f_k(\varphi^2)\right| =\left|\alpha - \varphi^2\right||f'_k(\omega)| = \lambda |f'_k(\omega)| < k\lambda<\dfrac{k}{\varphi^{k-2}}.
	\end{equation}

From the above, if we write 
	\[
	\alpha^{n} = \varphi^{2n} + \delta \qquad \text{and} \qquad f_k(\alpha) = f_k(\varphi^2) + \eta,
	\]
then inequalities \eqref{eq3.9p} and \eqref{eq3.10p} become
	\begin{equation} \label{eq3.12p}
		|\delta| < \dfrac{\varphi^{2n}}{\varphi^{k/2-2}} \qquad \text{and} \qquad |\eta|<\dfrac{k}{\varphi^{k-2}}.
	\end{equation}
Moreover, since $f_k(\varphi^2) = (5-\sqrt5)/10$ for all $k \geq 2$, we have
	\begin{align} \label{eq3.13p}
		f_k(\alpha) \alpha^{n} &=\left( f_k(\varphi^2) + \eta\right) (\varphi^{2n} + \delta )=\left( \dfrac{5-\sqrt{5}}{10} + \eta\right) (\varphi^{2n} + \delta )\nonumber\\
		& = \left( \dfrac{5-\sqrt{5}}{10}\right)\varphi^{2n} + \left( \dfrac{5-\sqrt{5}}{10}\right)\delta + \varphi^{2n}\eta + \eta \delta.
	\end{align}
Therefore, using \eqref{eq2.6m} and relations \eqref{eq3.12p} and \eqref{eq3.13p}, we get
	\begin{align*}
		\left|p_1^{\beta_1} \cdots p_s^{\beta_s} -\left( \dfrac{5-\sqrt{5}}{10}\right)\varphi^{2n}\right| &= \left| \left(P^{(k)}_n - f_k(\alpha)\alpha^{n}\right) +\left( \left( \dfrac{5-\sqrt{5}}{10}\right)\delta + \varphi^{2n}\eta + \eta \delta\right) \right| \\
		&< \frac{1}{2}+ \left( \dfrac{5-\sqrt{5}}{10}\right)\dfrac{\varphi^{2n}}{\varphi^{k/2-2}}+ \varphi^{2n}\cdot\dfrac{k}{\varphi^{k-2}}+\dfrac{\varphi^{2n}}{\varphi^{k/2-2}}\cdot \dfrac{k}{\varphi^{k-2}}.
	\end{align*}
Dividing both sides by $\varphi^{2n}(5-\sqrt5)/10$, we get	
	\begin{align*} 
\left|p_1^{\beta_1} \cdots p_s^{\beta_s} \left( \dfrac{10}{5-\sqrt{5}}\right)\varphi^{-2n}-1\right|
&<\frac{10}{2(5-\sqrt5)\varphi^{2n}}+ \dfrac{1}{\varphi^{k/2-2}}+ \dfrac{10k}{\varphi^{k-2}(5-\sqrt5)}+\dfrac{10k}{\varphi^{3k/2-4}(5-\sqrt5)}\nonumber\\
	&<\frac{2}{\varphi^{2n}}+ \dfrac{1}{\varphi^{k/2-2}}+ \dfrac{4k}{\varphi^{k-2}}+\dfrac{4k}{\varphi^{3k/2-4}}\nonumber\\
	&< \frac{2}{\varphi^{2n}}+ \dfrac{3}{\varphi^{k/2}}+ \dfrac{17}{\varphi^{k/2}}+\dfrac{22}{\varphi^{k/2}}\nonumber\\
	&\le \frac{44}{\varphi^{\min\{2n,k/2\}}} .
\end{align*}
Therefore, we have
	\begin{align} \label{eq3.14p}
|\Gamma_2|:=\left|p_1^{\beta_1} \cdots p_s^{\beta_s} \left( \dfrac{10}{5-\sqrt{5}}\right)\varphi^{-2n}-1\right|< \frac{44}{\varphi^{\min\{2n,k/2\}}}.
	\end{align} 
Now, we intend to apply Theorem \ref{thm:Matp} on the left hand side of \eqref{eq3.14p}. It should be noted that $\Gamma_2\ne 0$, otherwise we would have
$$p_1^{\beta_1} \cdots p_s^{\beta_s} =\left( \dfrac{5-\sqrt{5}}{10}\right)\varphi^{2n},$$ which is not true because the left hand side is rational while the right hand side is irrational. Here, $t := s+2$,
\begin{equation}\nonumber
	\begin{aligned}
	\gamma_{i}&:=p_i~~ \text{for}~i=1,2,\ldots, s, ~~~\gamma_{s+1}:=\varphi,~~~~~\gamma_{s+2}:=10/(5-\sqrt5),\\
		b_{i}&:=\beta_i~~ \text{for}~i=1,2,\ldots, s, ~~~b_{s+1}:=-2n, ~~b_{s+2}:=1.
	\end{aligned}
\end{equation}
The algebraic number field containing the above $\gamma_i$'s is $\mathbb{K} := \mathbb{Q}(\varphi)$, so $D = 2$. Since $h(\gamma_{i})=\log p_i \le \log p_s$ for all $i=1,2,\ldots, s$, we take $A_i:=2 \log p_s$ for all $i=1,2,\ldots, s$.  Additionally, 
$$h(\gamma_{s+1})=h(\varphi) = (\log \varphi)/2,$$
so we take $A_{s+1}:=\log \varphi$. Lastly, 
\[
h(\gamma_{s+2}) = h\left(10/(5-\sqrt5)\right) \le h(10)+h(5)+h(\sqrt5)+\log 2< 4\log 10,
\] 
so, we take $A_{s+2}:=8\log 10$.

Next, $B \geq \max\{|b_i|:i=1,2,\ldots, s,\ldots, s+2\}$. Notice that $b_i=\beta_i<1.4n$, for all $i=1,2,\ldots, s$ by relation  \eqref{eq2.7p}, so we take $B:=2n$. Now, by Theorem \ref{thm:Matp},
\begin{align}\label{eq3.15p}
\log |\Gamma_2| &> -1.4\cdot 30^{s+5} \cdot(s+2)^{4.5}\cdot 2^2 (1+\log 2)(1+\log (2n))\cdot (2\log p_s)^s\cdot \log\varphi\cdot 8\log 10\nonumber\\
&> -1.4\cdot 30^{s}\cdot 30^5 \cdot s^{4.5}\cdot \left(1+\dfrac{2}{s}\right)^{4.5}\cdot 2^2 (1+\log 2)\cdot \log n\left(\dfrac{1}{\log n}+\dfrac{\log 2}{\log n}+1\right)\nonumber\\
&\qquad\cdot 2^s(\log p_s)^s\cdot \log\varphi\cdot 8\log 10\nonumber\\
&> -1.1\cdot 10^{11} \cdot 60^s s^{4.5}(\log p_s)^s \log n.
\end{align}
Comparing \eqref{eq3.14p} and \eqref{eq3.15p}, we get
\begin{align*}
	\min\{2n,k/2\}\log \varphi-\log 44 &<1.1\cdot 10^{11} \cdot  s^{4.5}\log n\cdot(120\log s)^s,
\end{align*}
since $p_s<s^2$ for all $s\ge 2$. 

Therefore
\begin{align}\label{min}
	\min\{2n,k/2\} < 2.3\cdot 10^{11} \cdot  s^{4.5}\log n\cdot(120\log s)^s.
\end{align}

We proceed by distinguishing between two cases from \eqref{min}.

\textbf{Case I:} Suppose $\min\{2n,k/2\}:=k/2$, then $k/2 < 2.3\cdot 10^{11} \cdot  s^{4.5}\log n\cdot(120\log s)^s$, or equivalently 
\begin{align*}
k &< 4.6\cdot 10^{11} \cdot  s^{4.5}\log n\cdot(120\log s)^s\\
&< 4.6\cdot 10^{11} \cdot  s^{4.5}(37s\log k)\cdot(120\log s)^s
\end{align*}
since $\log n<37s\log k$ in \eqref{eq3.6p}. Therefore, we can write
\begin{align*}
	\dfrac{k}{\log k}<2 \cdot 10^{13}\cdot s^{5.5}\cdot(120\log s)^s.
\end{align*}
We again apply Lemma \ref{Guzp} with the data: $x:=k$, $m:=1$ and $T:=2 \cdot 10^{13}\cdot s^{5.5}\cdot(120\log s)^s>(4m^2)^m=4$, for $s\ge 2$. We get 
\begin{align*}
	k&<2^1\cdot 2 \cdot 10^{13}\cdot s^{5.5}\cdot(120\log s)^s (\log (2 \cdot 10^{13}\cdot s^{5.5}\cdot(120\log s)^s))^1\nonumber\\
	&= 4\cdot 10^{13}  s^{5.5}(120\log s)^s \left(\log (2\cdot 10^{13})+5.5\log  s+s\log(120\log s)\right)\nonumber\\
	&<4\cdot 10^{13}  s^{5.5}(120\log s)^s\cdot s\log s \left(\dfrac{31}{s\log s}+\dfrac{5.5}{s}+\dfrac{\log 120}{\log s}+\dfrac{\log\log s}{\log s}\right)\nonumber\\
	&<1.3\cdot 10^{15}s^{6.5}(120\log s)^s\log s.
\end{align*}
Therefore, 
\begin{align}\label{eq3.16p}
	\log k &<\log (1.3\cdot 10^{15})+6.5\log s+s\log(120\log s)+\log\log s\nonumber\\
	&< 35+6.5\log s+s\log120+s\log\log s+\log\log s\nonumber\\
	&=s\log s\left(\dfrac{35}{s\log s}+\dfrac{6.5}{s}+\dfrac{\log 120}{\log s}+\dfrac{\log \log s}{\log s}+\dfrac{\log \log s}{s\log s}\right)\nonumber\\
	&<35s\log s.
\end{align}
Recall that we are in the subsection when $n < \varphi^{k/2}$ , therefore $\log n <(k/2) \log \varphi < k$. This and relation \eqref{eq3.16p} tell us that $\log \log n < \log k < 35 s \log s$, hence
\begin{align}\label{ps2}
p_s > s \log s > (1/35) \log \log n.
\end{align}

\textbf{Case II:} If $\min\{2n,k/2\}:=2n$, then $2n < 2.3\cdot 10^{11} \cdot  s^{4.5}\log n\cdot(120\log s)^s$, or equivalently 
\begin{align*}
	\dfrac{n}{\log n} &< 1.2\cdot 10^{11} \cdot  s^{4.5}\cdot(120\log s)^s.
\end{align*}
We again apply Lemma \ref{Guzp} with the data: $x:=n$, $m:=1$ and $T:=1.2\cdot 10^{11} \cdot  s^{4.5}\cdot(120\log s)^s>(4m^2)^m=4$, for $s\ge 2$. We get 
\begin{align*}
	n&<2^1\cdot 1.2\cdot 10^{11} \cdot  s^{4.5}\cdot(120\log s)^s (\log (1.2\cdot 10^{11} \cdot  s^{4.5}\cdot(120\log s)^s)^1\nonumber\\
	&= 2.4\cdot 10^{11}  s^{4.5}(120\log s)^s \left(\log (1.2\cdot 10^{11})+4.5\log  s+s\log(120\log s)\right)\nonumber\\
	&<2.4\cdot 10^{11}  s^{4.5}(120\log s)^s\cdot s\log s \left(\dfrac{26}{s\log s}+\dfrac{4.5}{s}+\dfrac{\log 120}{\log s}+\dfrac{\log\log s}{\log s}\right)\nonumber\\
	&<6.6\cdot 10^{12}s^{5.5}(120\log s)^s\log s.
\end{align*}
Finally,
\begin{align*}
		\log n &<\log (6.6\cdot 10^{12})+5.5\log s+s\log(120\log s)+\log\log s\nonumber\\
	&< 30+5.5\log s+s\log 120+s\log\log s+\log\log s\nonumber\\
	&=s\log s\left(\dfrac{30}{s\log s}+\dfrac{5.5}{s}+\dfrac{\log 120}{\log s}+\dfrac{\log \log s}{\log s}+\dfrac{\log \log s}{s\log s}\right)\nonumber\\
	&<31s\log s.
\end{align*}
Therefore, 
\begin{align}\label{ps3}
	p_s > s \log s > (1/31) \log  n.
\end{align}
Comparing the four relations \eqref{ps0}, \eqref{ps1}, \eqref{ps2} and \eqref{ps3} completes the proof of Theorem \ref{1.1p}. \qed

\section{Proof of Theorem \ref{1.2p}}\label{Sec4}

Here, we are interested in solving the Diophantine equation \eqref{eq1.1p}
in non-negative integers $n$, $k$, $a$, $b$, $c$, $d$, $e$ with $k \ge 2$ and $n \ge 4$. We start by distinguishing between the cases $n\le k+1$ and $n>k+1$.
\subsection{The case $4\le n\le k+1$}
In this case, $P_n^{(k)} = F_{2n-1}$, so we turn out attention to solving 
\begin{align}\label{fm}
	 F_{m} = 2^a \cdot 3^b\cdot 5^c\cdot 7^d,
\end{align}
where $m:=2n-1$. In fact, $m=2n-1\le 2(k+1)-1=2k+1$ and $m=2n-1\ge 2\cdot 4-1=7$. Therefore,
$$7\le m\le 2k+1.$$

For an integer $\ell\ge 2$, the Diophantine equation $F_{m}^{(\ell)} = 2^a \cdot 3^b\cdot 5^c\cdot 7^d$ was completely solved in \cite{Brl}, for all integers $m>\ell+1$ and it was found out that for $\ell=2$, the solutions are
\begin{align*}
	F_4=3,\qquad F_5 = 5, \qquad F_6 = 8, \qquad F_8 = 21\qquad\text{and}\qquad F_{12}=144,
\end{align*}
and none of these are $P_n^{(k)}$ values with $n\ge 4$. 

On the other hand, if $m\le \ell+1$, then $F_{m}^{(\ell)}=2^{m-2}$. It is known that $1$, $2$, $8$ are the only powers of two that appear in our familiar Fibonacci sequence, (when $\ell :=2$). One proof of this fact follows from Carmichael's Primitive Divisor theorem, which states that for $n$ greater than $12$, the $m^{\text{th}}$ Fibonacci number $F_m$ has at least one prime factor that is not a factor of any previous Fibonacci number. Since $m\ge 7$, none of these powers 2 make it to our required solutions to \eqref{fm}.

From now on, $n>k+1$.
\subsection{The case $n>k+1$}
We proceed in a similar way as in Section \ref{Sec3p}. Specifically, we prove following estimates.
\begin{lemma}\label{lem4.1p}
If $\mathcal{P}(P_n^{(k)} ) \le 7$, then:
\begin{enumerate}[(a)]
	\item The inequality	
	\begin{align*}
		n  < 2.3\cdot 10^{23} k^7 (\log k)^3,
	\end{align*} 	
	holds for $k \ge 2$ and $n \ge 4$.
	\item If $k > 2500$, then 
	\begin{align*}
		k < 1.2\cdot 10^{28}\qquad	\text{and}\qquad	n <2.5\cdot 10^{225}.
	\end{align*}
\end{enumerate}
\end{lemma}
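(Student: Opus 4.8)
The plan is to handle both parts by specializing the linear-forms-in-logarithms machinery of Section~\ref{Sec3p} to the case in which every prime factor of $P_n^{(k)}$ lies in $\{2,3,5,7\}$, so that in the notation of \eqref{pf} we have $s\le 4$ and $p_s\le 7$. For part~(a) I would simply re-run Lemma~\ref{lem3.1p} with these two numerical inputs. Since that argument rests on the linear form $\Gamma_1$ over $\mathbb{Q}(\alpha)$, it is valid for every $k\ge 2$ with no restriction on the size of $n$, which is exactly why it is the right tool here (as opposed to the golden-ratio form, which needs $n<\varphi^{k/2}$). Concretely, I would divide \eqref{eq2.6m} by $f_k(\alpha)\alpha^{n}$ to obtain $\Gamma_1=2^a3^b5^c7^d\,\alpha^{-n}(f_k(\alpha))^{-1}-1$ with $|\Gamma_1|<1.82\,\alpha^{-n}$ as in \eqref{eq3.2p}, apply Matveev's Theorem~\ref{thm:Matp} with $D=k$, $t=s+2\le 6$, $A_i=k\log p_i$ for the primes, $A_{s+1}=1$ for $\alpha$, and $A_{s+2}=4.5k^2\log k$ for $f_k(\alpha)$ via \eqref{eqh}, and compare using $\log\alpha>\log\varphi$. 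This yields an inequality of the shape $n<(\text{const})\,k^{c}(\log k)^2\log n$ with $c$ and the constant explicit once $s\le 4$ and $p_s\le 7$ are substituted. Removing the $\log n$ factor with Lemma~\ref{Guzp} ($m=1$) then delivers $n<2.3\cdot10^{23}k^{7}(\log k)^3$; this is pure bookkeeping, the only delicate point being to use the exact values $\log 2,\ldots,\log 7$ and the precise form of \eqref{eqh} so that the advertised exponent and constant come out.

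For part~(b) the first move is to use part~(a) to pin down the correct regime. Because the polynomial bound $2.3\cdot10^{23}k^{7}(\log k)^3$ is eventually dominated by $\varphi^{k/2}$ — one checks $\log\!\big(2.3\cdot10^{23}k^{7}(\log k)^3\big)<(k/2)\log\varphi$ already at $k=2500$, and the gap only widens afterward — the hypothesis $k>2500$ forces $n<\varphi^{k/2}$, placing us in Subsection~\ref{subsec3.3p}. There one replaces $\alpha$ by $\varphi^2$ and $f_k(\alpha)$ by $f_k(\varphi^2)=(5-\sqrt5)/10$, producing the second linear form $\Gamma_2$ of \eqref{eq3.14p} with $|\Gamma_2|<44\,\varphi^{-\min\{2n,k/2\}}$. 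The decisive simplification is that we are in the case $n>k+1$: then $2n>2(k+1)>k/2$, so $\min\{2n,k/2\}=k/2$ identically, and the awkward alternative in \eqref{min} simply never arises.

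Applying Theorem~\ref{thm:Matp} to $\Gamma_2$ over $\mathbb{Q}(\varphi)$ — where $D=2$, so the estimate \eqref{eq3.15p} carries no $k$-dependence — and comparing with $|\Gamma_2|<44\,\varphi^{-k/2}$ gives $k/2<C\log n$ for an explicit $C$ obtained by inserting $s\le 4$ and $p_s\le 7$. Substituting the part~(a) estimate $\log n<\log(2.3\cdot10^{23})+7\log k+3\log\log k$ then yields a self-referential inequality $k<C'\bigl(\text{const}+7\log k+3\log\log k\bigr)$, which I would solve with one further application of Lemma~\ref{Guzp} to reach $k<1.2\cdot10^{28}$; feeding this back into part~(a) gives $n<2.3\cdot10^{23}(1.2\cdot10^{28})^{7}\bigl(\log(1.2\cdot10^{28})\bigr)^3<2.5\cdot10^{225}$. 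The work here is arithmetic rather than conceptual: the real obstacle is to keep every Matveev constant and every Guzm\'an--Luca estimate tight enough that the stated numbers hold, and — more structurally — to justify the passage from $\mathbb{Q}(\alpha)$ (degree $k$) to $\mathbb{Q}(\varphi)$ (degree $2$), since it is precisely the disappearance of the $k$-dependence in \eqref{eq3.15p} that turns $k/2<C\log n$ into a genuine finite bound on $k$.
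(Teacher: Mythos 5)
Your proposal is correct and follows the paper's own proof essentially step for step: part (a) specializes the $\Gamma_1$/Matveev argument behind Lemma \ref{lem3.1p} (i.e., inequality \eqref{eq3.5p}) to $s\le 4$, $p_s\le 7$ and removes the factor $\log n$ via Lemma \ref{Guzp}, while part (b) uses part (a) to conclude $n<\varphi^{k/2}$ once $k>2500$, observes that $n>k+1$ forces $\min\{2n,k/2\}=k/2$ in \eqref{eq3.14p}, applies Matveev over $\mathbb{Q}(\varphi)$ (degree $2$, hence no $k$-dependence), solves the resulting self-referential inequality for $k$ with Lemma \ref{Guzp}, and feeds $k<1.2\cdot 10^{28}$ back into part (a). The only immaterial difference is that in part (b) you substitute the bound $\log n<\log(2.3\cdot 10^{23})+7\log k+3\log\log k$ from part (a), whereas the paper substitutes $\log n<37s\log k$ from \eqref{eq3.6p} (its Case I of Subsection \ref{subsec3.3p} with $s=4$); both routes yield the stated numerical bounds.
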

\begin{proof}~
\begin{enumerate}[(a)]
	\item To establish the first part, we follow a similar line of reasoning as in Subsection~\ref{subsec3.1p}. Specifically, when $s = 4$, inequality~\eqref{eq3.2p} gives
	\begin{align}\label{eq4.1p}
		\left|2^{a}\cdot 3^b\cdot 5^c\cdot 7^d\cdot \alpha^{-n}\cdot(f_k(\alpha))^{-1}-1\right|
		&<\dfrac{1.82}{\alpha^{n}},
	\end{align}
for which we obtain as before, by substituting $s=4$ in \eqref{eq3.5p}, that 	
	\begin{align*}
		n&<4\cdot 10^{10} \cdot 30^4 \cdot 4^{4.5}k^{3+4}(\log k)^2(\log 7)^4 \log n\\
		&<1.5\cdot 10^{21}k^7(\log k)^2 \log n.
	\end{align*}
Specifically, we have 
	\begin{align*}
	\dfrac{n}{\log n}
	&<1.5\cdot 10^{21}k^7(\log k)^2 .
\end{align*}
We now apply Lemma \ref{Guzp} with  $x:=n$, $m:=1$, $T:=1.5\cdot 10^{21}k^7(\log k)^2 >(4m^2)^m=4$ for $k\ge 2$. We get 
\begin{align*}
	n&<2\cdot 1.5\cdot 10^{21}k^7(\log k)^2 \log (1.5\cdot 10^{21}k^7(\log k)^2)\\
	&=3\cdot 10^{21}k^7(\log k)^2 \left(\log (1.5\cdot 10^{21})+7\log k+2\log\log k\right)\\
	&<3\cdot 10^{21}k^7(\log k)^3 \left(\dfrac{49}{\log k}+7+\dfrac{2\log\log k}{\log k}\right)\\
	&<2.3\cdot 10^{23}k^7(\log k)^3.
\end{align*}
\item In the second part, if $k>2500$, then 
\begin{align*}
	n	<2.3\cdot 10^{23}k^7(\log k)^3<\varphi^{k/2}.
\end{align*}
For this reason, we can use the same arguments from Subsection \ref{subsec3.3p} relation \eqref{eq3.14p} to write 
	\begin{align} \label{eq4.2p}
	\left|2^{a}\cdot 3^{b}\cdot 5^c\cdot 7^d\cdot \left( \dfrac{10}{5-\sqrt{5}}\right)\varphi^{-2n} -1\right|< \frac{44}{\varphi^{\min\{2n,k/2\}}},
\end{align}
from which after applying Matveev's result with $s=4$ as in Subsection \ref{subsec3.3p} we get
\begin{align*}
\min\{2n,k/2\} < 2.3\cdot 10^{11} \cdot  4^{4.5}\log n\cdot(120\log 4)^4<9.1\cdot 10^{22}\log n.
\end{align*}

In this subsection, $n>k+1$, so $2n>n>k+1>k/2$. This means that $\min\{2n,k/2\}:=k/2$. Therefore, we are in Case I of Subsection \ref{subsec3.3p} and for $s=4$, we have
\begin{align*}
	k<1.3\cdot 10^{15}\cdot 4^{6.5}(120\log 4)^4\log 4 < 1.2\cdot 10^{28}.
\end{align*}
Substituting this upper bound on $k$ in part (a) of Lemma \ref{lem4.1p}, we get
\begin{align*}
	n  &< 2.3 \cdot 10^{23}(1.2\cdot 10^{28})^7 (\log 1.2\cdot 10^{28})^3<2.5\cdot 10^{225}.
\end{align*}
\end{enumerate}
This completes the proof of Lemma \ref{lem4.1p}.
\end{proof}
To complete the proof of Theorem \ref{1.2p}, we proceed in two cases, that is, the case $k\le2500$ and the case $k>2500$.

\subsubsection{The case $k\le 2500$.}
In this subsection, we handle the values of $k$ in $[2,2500]$. From Lemma \ref{lem4.1p}, we know that if $k \le 2500$, then $n < 6.8 \cdot 10^{49}$. Our goal now is to make this upper limit for $n$ smaller. For that purpose, we define
\[
\tau_1 := a \log 2 + b \log 3 + c \log 5 + d \log 7 - n \log \alpha - \log f_k(\alpha),
\]
so that \eqref{eq4.1p} can be rewritten as
\begin{equation}\label{eq4.3p}
	|e^{\tau_1} - 1| < \frac{1.82}{\alpha^{n}}.
\end{equation}
Note that $\tau_1 \ne 0$, and this can be shown in the same way we proved that $\Gamma_1 \ne 0$. Now, if $\tau_1 > 0$, then it follows that $e^{\tau_1} - 1 > 0$. Using inequality~\eqref{eq4.3p}, we get
$
0 < \tau_1 < 1.82/\alpha^{n},
$
where we applied the basic inequality $x \le e^x - 1$ for all real numbers $x$. 

Let us now look at the case when $\tau_1 < 0$. Since $n\ge 4$, then we know that $1.82/\alpha^{n} < 1/2$ for every $k \ge 2$. From~\eqref{eq4.3p}, this implies that $|e^{\tau_1} - 1| < 1/2$, meaning that $e^{\tau_1} < 1.5$. Since $\tau_1$ is negative, we use this to get
\begin{equation*}
0 < |\tau_1| \le e^{|\tau_1|} - 1 = e^{|\tau_1|} \cdot |e^{\tau_1} - 1| < \frac{4}{\alpha^{n}}.
\end{equation*}
So in both situations, we conclude that
\begin{equation*}
	|\tau_1| < \frac{4}{\alpha^{n}},
\end{equation*}
for all $k \ge 2$ and $n \ge 4$.
For each $k \in [2, 2500]$, we used the LLL-algorithm to compute a lower bound for the smallest nonzero number of the form $|\tau_1|$, with integer coefficients not exceeding $2.3 \cdot 10^{23} k^7 (\log k)^3$ in absolute value. Specifically, we consider the approximation lattice
$$ \mathcal{A}=\begin{pmatrix}
	1 & 0 & 0 & 0 & 0 & 0 \\
	0 & 1 & 0 & 0 & 0 & 0 \\
	0 & 0 & 1 & 0 & 0 & 0 \\
	0 & 0 & 0 & 1 & 0 & 0 \\
	0 & 0 & 0 & 0 & 1 & 0 \\
	0 & 0 & 0 & 0 & 0 & 0 \\
	\lfloor C\log 2\rfloor & \lfloor C\log 3\rfloor& \lfloor C\log 5 \rfloor& \lfloor C\log 7 \rfloor & \lfloor C\log\alpha \rfloor & \lfloor C\log f_k(\alpha) \rfloor
\end{pmatrix} ,$$
with $C:= 10^{299}$ and choose $y:=\left(0,0,0,0,0,0,0\right)$. 

Now, by Lemma \ref{lem2.5p}, we get $$|\tau_1|>c_1=10^{-53}\qquad\text{and}\qquad c_2=5.8\cdot10^{51}.$$
So, Lemma \ref{lem2.6p} gives $S=2.312\cdot 10^{100}$ and $T=2.05\cdot 10^{50}$. Since $c_2^2\ge T^2+S$, then choosing $c_3:=4$ and $c_4:=\log\alpha$, we get $n\le 1214$.

To finish, we created a basic Python script (see Appendix \ref{app1}) to examine the $k$-generalized Pell numbers for values of $k\in [2, 2500]$ and  $n\in [k + 2, 1214]$. Instead of fully factoring the numbers, we repeatedly divided each one by $2$, $3$, $5$, and $7$ as long as possible. This allowed us to check whether each term is composed only of these four primes. The values that satisfied this condition are exactly those listed in Theorem \ref{1.2p}, and the Python script ran for about 16 hours. This completes our investigation for $k\in [2, 2500]$.

\subsubsection{The case $k>2500$}
Finally, we consider the case when $k > 2500$. At this stage, we aim to lower the upper limit on $k$ obtained in Lemma \ref{lem4.1p}. To achieve this, let us define
\[
\tau_2 := a \log 2 + b \log 3 + c \log 5 + d \log 7 + \log\left( \dfrac{10}{5-\sqrt{5}}\right)  -2n\log \varphi,
\]
so that inequality \eqref{eq4.2p} can be rewritten as
\begin{equation*}
	|e^{\tau_2} - 1| <\frac{44}{\varphi^{\min\{2n,k/2\}}}.
\end{equation*}
Again, $\tau_2$ is clearly nonzero by a similar argument used to deduce that $\Gamma_2\ne 0$. As before, regardless of the sign of $\tau_2$, we arrive at the inequality
\begin{equation}
	|\tau_2| < \frac{66}{\varphi^{\min\{2n,k/2\}}}.
\end{equation}

So, we consider the approximation lattice
$$ \mathcal{A}=\begin{pmatrix}
	1 & 0 & 0 & 0 & 0 & 0 \\
	0 & 1 & 0 & 0 & 0 & 0 \\
	0 & 0 & 1 & 0 & 0 & 0 \\
	0 & 0 & 0 & 1 & 0 & 0 \\
	0 & 0 & 0 & 0 & 1 & 0 \\
	0 & 0 & 0 & 0 & 0 & 0 \\
	\lfloor C\log 2\rfloor & \lfloor C\log 3\rfloor& \lfloor C\log 5 \rfloor& \lfloor C\log 7 \rfloor & \lfloor C\log 10/(5-\sqrt{5}) \rfloor & \lfloor C\log \varphi \rfloor
\end{pmatrix} ,$$
with $C:= 10^{1356}$ and choose $y:=\left(0,0,0,0,0,0\right)$. By Lemma \ref{lem2.5p}, we get $$|\tau_2|>c_1=10^{-228}\qquad\text{and}\qquad c_2=2.7\cdot10^{226}.$$
So, Lemma \ref{lem2.6p} gives $S=3.125\cdot 10^{451}$ and $T=7.51\cdot 10^{225}$. Since $c_2^2\ge T^2+S$, then choosing $c_3:=66$ and $c_4:=\log \varphi$, we get $\min\{2n,k/2\} \le 5414$. Since $2n>n>k+1>k/2$, then $\min\{2n,k/2\}:=k/2$ implying that $k\le 10828$. Thus, the first part of Lemma \ref{lem4.1p} tells us that $n < 3.3 \cdot 10^{54}$.

With this new upper bound for $n$ we repeat the LLL-algorithm to get a lower bound of $|\tau_2|$. With the same approximation lattice and $C:=1.3\cdot 10^{327}$, we get $|\tau_2|>c_1=10^{-57}$, $c_2=3.6\cdot10^{56}$, $S=5.5\cdot 10^{109}$ and $T=9.91\cdot 10^{54}$. We then obtain that $k \leq 2606$ and the first part of Lemma \ref{lem4.1p} tells us that $n < 9.2\cdot 10^{49}$.

Again, using this improved upper limit for $n$, we re-apply the LLL-algorithm to derive a lower bound for $|\tau_2|$ than before. By using the same lattice for the approximation and setting $C := 10^{300}$, we find that $|\tau_2| > c_1 = 10^{-52}$,  $c_2 = 1.4 \cdot 10^{51}$, $S = 4.3 \cdot 10^{100}$ and $T = 2.77 \cdot 10^{50}$. From this, it follows that $k \leq 2398$, contradicting our earlier assumption that $k>2500$. Therefore, Theorem \ref{1.2p} is proved.
 \qed

\section*{Acknowledgments} 
The author was funded by the cost centre 0730 of the Mathematics Division at Stellenbosch University.

\section*{Addresses}
$ ^{1} $ Mathematics Division, Stellenbosch University, Stellenbosch, South Africa.

 Email: \url{hbatte91@gmail.com}

\pagebreak
\appendix
\section{Appendices}

\subsection{Py Code I}\label{app1}
\begin{verbatim}
# Dictionary to memoize computed values
memo = {}

def k_generalized_pell_iterative(n, k):
    # Handle trivial cases
    if n < 2 - k:
        return 0
    elif n == 1:
        return 1

    # Check if already computed
    if (n, k) in memo:
        return memo[(n, k)]

    # Initialize the list of Pell numbers
    pell_nums = [0] * (2 - k) + [0, 1] + [0] * (n - 1)

    # Compute iteratively
    for i in range(2, n + 1):
        pell_nums[i] = 2 * pell_nums[i - 1] + sum(
            pell_nums[i - j] for j in range(2, k + 1) if i - j >= 0
        )
        memo[(i, k)] = pell_nums[i]

    return pell_nums[n]

def is_of_form_2a_3b_5c_7d(val):
    for prime in [2, 3, 5, 7]:
        while val % prime == 0 and val > 1:
            val //= prime
    return val == 1

# Search for valid Pell numbers with prime factors <= 7
for k in range(2, 2500):
    for n in range(k+2, 1214):
        pell_val = k_generalized_pell_iterative(n, k)
        if pell_val > 0 and is_of_form_2a_3b_5c_7d(pell_val):
             print(f"For k={k}, n={n}: P_n^({k}) = {pell_val}")
\end{verbatim}

\end{document}